\newfont{\Bbb}{msbm10 scaled\magstephalf}
 \newtheorem{thm}{Theorem}[section]
 \newtheorem{cor}[thm]{Corollary}
 \newtheorem{prop}[thm]{Proposition}
 \theoremstyle{definition}
 \newtheorem{defn}[thm]{Definition}
\theoremstyle{remark}
 \newtheorem{rem}[thm]{Remark}
 \newtheorem{exm}[thm]{Example}
 \numberwithin{equation}{section}
\newcommand{\pf}{\begin{proof}}
\newcommand{\zb}{\end{proof}}
\newcommand{\la}{\langle}
\newcommand{\ra}{\rangle}
\newcommand{\Ker}{\mathop{\rm Ker}\nolimits}
\begin{document}
\title[perturbations of Toeplitz operators]{Representing kernels of perturbations of Toeplitz operators by backward shift-invariant subspaces}
\author[Y. Liang ]{Yuxia Liang}
\address{Yuxia Liang \newline School of Mathematical Sciences,
Tianjin  Normal University,  Tianjin 300387, P.R. China.} \email{liangyx1986@126.com}
\author[J. R. Partington]{Jonathan R. Partington}
\address{Jonathan R. Partington \newline School of Mathematics,
  University of Leeds, Leeds LS2 9JT, United Kingdom.}
 \email{J.R.Partington@leeds.ac.uk}
\subjclass[2010]{Primary: 46E22, 47B38; Secondary 47A15.}
\keywords{Shift-invariant subspace, nearly $S^*$-invariant subspace, Toeplitz operator}
\begin{abstract}
 It is well known the kernel of a Toeplitz operator is nearly invariant under the backward shift $S^*$. This paper shows that ker\-nels of finite rank perturbations of Toeplitz operators are near\-ly $S^*$-invariant with finite defect. This enables us to apply a recent theorem by Chalendar--Gallardo--Partington to represent the kernel in terms of backward shift-invariant subspaces, which we identify in several important cases.
\end{abstract}
\maketitle

\section{Introduction}
Let $H(\mathbb{D})$ be the space of all analytic functions on the open unit disc $\mathbb{D}$. The Hardy space $H^2:=H^2(\mathbb{D})$  is defined by $$H^2=\{f\in H(\mathbb{D}):\;f(z)=\sum_{n=0}^\infty a_nz^n\;\mbox{with}\;\|f\|^2:=\sum_{n=0}^\infty |a_n|^2<+\infty \}.$$

The limit $\lim\limits_{r\rightarrow 1^{-}} f(re^{it})$ exists almost everywhere, which gives the values of $f$ on the unit circle $\mathbb{T}$. Since the $H^2$ norm of $f$ and the $L^2(\mathbb{T})$ norm of
its boundary function coincide, $H^2$ embeds isometrically as a closed subspace of $L^2(\mathbb{T})$ via $$\sum_{n=0}^\infty a_n z^n\mapsto \sum_{n=0}^\infty a_n e^{int}.$$ This indicates a natural orthogonal decomposition $L^2(\mathbb{T})=H^2\oplus \overline{H_0^2},$  where $H^2$ is identified with the subspace spanned by $\{e^{int}:\;n\geq 0\}$ and $\overline{H_0^2}$ is the subspace spanned by $\{e^{int}:\;n< 0\},$ respectively.

Let $L^\infty:=L^\infty (\mathbb{T})$ be the space containing all essentially bounded functions on $\mathbb{T}$. And $H^\infty:=H^\infty(\mathbb{D})$ is the Banach algebra of bounded analytic functions on $\mathbb{D}$ with the norm defined $$\|f\|_\infty=\sup\limits_{z\in \mathbb{D}}|f(z)|.$$
Similarly, the radial boundary function of an $H^\infty$ function belongs to $L^\infty,$ and then $H^\infty$ can be viewed as a Banach subalgebra of $L^\infty.$

We recall an inner function is an $H^\infty$ function that has unit modulus almost everywhere on $\mathbb{T}$. An outer function is a function $f\in H^1$ which can be written in the form
$$f(re^{i\eta})=\alpha \exp(\frac{1}{2\pi} \int_0^{2\pi} \frac{e^{it}+re^{i\eta}}{e^{it}-re^{i\eta}}k(e^{it})dt)$$ for $re^{i\eta}\in \mathbb{D}$, where $k$ is a real-valued integrable function and $|\alpha|=1.$
It  is known that each $f\in H^1\setminus\{0\}$ has a factorization $f=\theta\cdot u$, where $\theta$ is inner and $u$ is outer. This factorization is unique up to a constant of modulus $1$ (cf. \cite{CGP2}).

  Let $P: \;L^2(\mathbb{T})\rightarrow H^2$ be the orthogonal projection on $H^2$ defined by a Cauchy integral $$(Pf)(z)=\int \frac{f(\zeta)}{1-\overline{\zeta}z}dm(\zeta),\;|z|<1.$$

Given $g\in L^\infty$, the Toeplitz operator $T_g:\;H^2\rightarrow H^2$ is defined by $$T_g f=P(gf)$$ for any $f\in H^2.$  If $\theta$ is an inner function, then $\Ker T_{\overline{\theta}}$ is the model space $K_{\theta}=H^2\ominus \theta H^2=H^2\cap \theta \overline{H_0^2}$ (cf. \cite{GMR,Ha}). It has also been proved that $\|T_g\|=\|g\|_\infty$ and $T_g^*=T_{\overline{g}}$ (cf. \cite{BH}). For more investigations into Toeplitz operators, the reader can refer to \cite{CP,CMP,Sa2} and so on.\vspace{1mm}

Beurling's theorem states that the subspaces $\theta H^2$ with inner function $\theta$ constitute the nontrivial invariant subspaces for the unilateral shift $S: \;H^2\rightarrow  H^2$ defined by $[S f](z) = z f(z).$ Also the model space $K_\theta$ is invariant under the backward shift $S^*:\;H^2\rightarrow  H^2$ (cf. \cite[Proposition 5.2]{GMR}) defined by  $$S^* f(z)=\frac{f(z)-f(0)}{z}\;\;(f\in H^2,\;z\in \mathbb{D}).$$

The invariant subspace problem is still an unresolved problem in operator theory and there are various related investigations (cf. \cite{CaP,CaP1}). Moreover, the study of nearly $S^*$-invariant subspaces has attracted a lot of attention (cf. \cite{hitt,Sa1,CaP1}).

\begin{defn} A subspace $M\subset H^2$ is called nearly $S^*$-invariant if $S^* f\in M$ whenever $f\in M$ and $f(0)=0.$ Furthermore, a subspace $M\subset H^2$ is said to be nearly $S^*$-invariant with defect $m$ if there is an $m$-dimensional subspace $F$ such that $S^* f\in M + F$ whenever $f\in M$ with $f(0)=0$; we call $F$ the defect space.\end{defn}

If $f\in \Ker T_g$ with $f(0)=0,$ so $gf\in \overline{H_0^2}$ and then $g(\overline{z}f)\in \overline{H_0^2}$. Since $\overline{z}f\in H^2$, this implies $S^*f=\overline{z}f\in \Ker T_g$, which shows the kernel of a Toeplitz operator is nearly $S^*$-invariant. 
Motivated by this well-known result, we continue to examine a question which has a close link with the invariant subspace problem:\vspace{1mm}

\emph{Given a Toeplitz operator $T_g$ acting on Hardy space $H^2,$ is the kernel of a rank $n$ perturbation of $T_g$ nearly $S^*$-invariant with finite defect?}\vspace{1mm}

We recall that an operator $T:\;\mathcal{H}\rightarrow \mathcal{H}$ of rank $n$ on a Hilbert space $\mathcal{H}$  takes the form \begin{eqnarray*} Th =\sum_{i=1}^n \la h, u_i \ra v_i\;\mbox{for all}\; h\in \mathcal{H},\end{eqnarray*} where $\{u_i\}$ and $\{v_i\}$ are orthogonal sets in $\mathcal{H}$ (we may also suppose that
$\{u_i\}$ is orthonormal).  For simplicity, write $A_n:=\{1, 2,\cdots, n\}$ and let $|\Lambda|$ stand for the number of integers in a set $\Lambda.$

A rank $n$ perturbation of the Toeplitz operator $T_g: H^2\rightarrow H^2$ denoted by $R_n: H^2\rightarrow H^2$ is defined by \begin{eqnarray}R_n(h)=T_gh+Th= T_gh+\sum_{i=1}^n \la h, u_i\ra v_i\label{RnT}\end{eqnarray} with orthonormal set $\{u_i\}$ and orthogonal set $\{v_i\}$ in $H^2$.\vspace{1mm}

The rest of the paper is organized as follows. In Section 2, we discuss the nearly $S^*$-invariant subspace $\Ker R_n$ with finite defect for several important classes of symbols and present the corresponding defect space in each case. Then we apply a recent theorem by Chalendar--Gallardo--Partington to represent the kernel of the operator $R_1$ in terms of backward shift-invariant subspaces in Section 3. The challenging task here is to identify the subspaces in question, which we do in various important cases. Note that even in the nearly $S^*$-invariant (defect $0$) case, this is known to be a difficult question in general.

\section{nearly $S^*$-invariant $\Ker R_n$ with finite defect}
In this section, we prove that the kernel of the operator $R_n$ in \eqref{RnT} is nearly $S^*$-invariant with finite defect for various important cases, especially identify the finite-dimensional defect spaces. First of all, we recall a useful theorem for later use.
\begin{thm}\cite[Theorem 4.22]{GMR}  \label{thm infinity}For $\psi,\;\varphi \in L^\infty$, the operator $T_\psi T_{\varphi}$ is a Toeplitz operator if and only if either $\overline{\psi}\in H^\infty$ or $\varphi\in H^\infty.$ In both cases, $T_\psi T_\varphi=T_{\psi\varphi}.$ \end{thm}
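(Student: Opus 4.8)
The plan is to route both directions through the factorization of $T_\psi T_\varphi$ into a Toeplitz part and a product of Hankel operators. For $a\in L^\infty$ write $H_a=(I-P)M_a|_{H^2}\colon H^2\to\overline{H_0^2}$, where $M_a$ denotes multiplication by $a$; since $T_a=PM_aP$ and $I=P+(I-P)$, a direct computation gives
\[
T_\psi T_\varphi=T_{\psi\varphi}-H_{\overline\psi}^{\,*}H_\varphi .
\]
The sufficiency together with the formula are then immediate: if $\varphi\in H^\infty$ then $\varphi f\in H^2$ for every $f\in H^2$, so $H_\varphi=0$; and if $\overline\psi\in H^\infty$ then likewise $H_{\overline\psi}=0$, hence $H_{\overline\psi}^{\,*}=0$. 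In either case the Hankel term vanishes and $T_\psi T_\varphi=T_{\psi\varphi}$.

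For the necessity I would first reduce the problem to showing that the Hankel term is zero. Every Toeplitz operator $T$ satisfies $S^*TS=T$ (the elementary half of the Brown--Halmos characterization, read off from the constancy of its matrix along diagonals, since $\la T_\omega z^{j+1},z^{i+1}\ra=\la T_\omega z^{j},z^{i}\ra$). Assuming $T_\psi T_\varphi$ is Toeplitz and applying this to both $T_\psi T_\varphi$ and $T_{\psi\varphi}$, one obtains $S^*KS=K$ for $K:=H_{\overline\psi}^{\,*}H_\varphi=T_{\psi\varphi}-T_\psi T_\varphi$. Iterating gives $K=S^{*k}KS^{k}$ for all $k\ge 0$, so that
\[
\la Kf,g\ra=\la H_\varphi S^{k}f,\;H_{\overline\psi}S^{k}g\ra\qquad(f,g\in H^2).
\]
Now $\|H_\varphi S^{k}f\|^2=\sum_{n<0}|\widehat{\varphi f}(n-k)|^2=\sum_{m<-k}|\widehat{\varphi f}(m)|^2\to 0$ as $k\to\infty$, because $\varphi f\in L^2(\mathbb{T})$, while $\|H_{\overline\psi}S^{k}g\|\le\|H_{\overline\psi}\|\,\|g\|$ stays bounded. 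Hence the right-hand side tends to $0$, forcing $K=0$ and $T_\psi T_\varphi=T_{\psi\varphi}$.

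It remains to deduce from $H_{\overline\psi}^{\,*}H_\varphi=0$ that one of the two symbols is (co)analytic, and this is the step I expect to be the main obstacle. The identity says precisely that $\overline{\operatorname{ran}H_\varphi}\perp\overline{\operatorname{ran}H_{\overline\psi}}$ inside $\overline{H_0^2}$. Using the intertwining $H_aS=\tilde S H_a$, where $\tilde S=(I-P)M_z|_{\overline{H_0^2}}$, each of these closed ranges is $\tilde S$-invariant; since $\tilde S$ is unitarily equivalent to $S^*$ on $H^2$ (via $\overline z^{\,k}\mapsto z^{k-1}$), each range corresponds to a backward shift-invariant subspace, that is, to a model space. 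The crux is then the fact that two nonzero model spaces can never be orthogonal: writing them as $K_{\theta_1}$ and $K_{\theta_2}$, orthogonality would force $K_{\theta_1}\subseteq K_{\theta_2}^\perp=\theta_2H^2$, so that every element of $K_{\theta_1}$ is divisible by the nonconstant inner function $\theta_2$; but the reproducing kernels $k_w(z)=(1-\overline{\theta_1(w)}\,\theta_1(z))/(1-\overline w z)\in K_{\theta_1}$ have inner parts with trivial greatest common divisor (comparing $k_{w_1}$ and $k_{w_2}$ for $\theta_1(w_1)\ne\theta_1(w_2)$ shows any common inner divisor must divide both $\theta_1$ and a unit), a contradiction. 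Consequently one of the ranges is $\{0\}$, i.e. $H_\varphi=0$ or $H_{\overline\psi}=0$, which is exactly $\varphi\in H^\infty$ or $\overline\psi\in H^\infty$.
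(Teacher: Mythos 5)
The paper does not prove this statement at all: it is quoted verbatim from \cite[Theorem 4.22]{GMR} (the Brown--Halmos product theorem), so there is no internal proof to compare yours against. Your argument is a correct and essentially complete self-contained proof along the classical lines: the identity $T_\psi T_\varphi=T_{\psi\varphi}-H_{\overline\psi}^{\,*}H_\varphi$ yields sufficiency and the product formula at once; the easy half of the Brown--Halmos characterization $S^*TS=T$, iterated and combined with the tail estimate $\|H_\varphi S^k f\|\to 0$, forces the Hankel term to vanish whenever $T_\psi T_\varphi$ is Toeplitz; and the resulting orthogonality of the closed, backward-shift-invariant ranges of $H_\varphi$ and $H_{\overline\psi}$ then annihilates one of them. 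Two small points of presentation, neither a gap. First, ``each range corresponds to a model space'' should read ``to $\{0\}$, to all of $H^2$, or to a model space $K_\theta$ with $\theta$ nonconstant inner''; the full-space case is harmless (orthogonality then forces the other range to be $\{0\}$ immediately), but as written the trichotomy is skipped. Second, your greatest-common-divisor argument with two reproducing kernels is valid (the combination $\overline{b}\,k_{w_1}-\overline{a}\,k_{w_2}$, up to the outer factor $1-\overline{w}z$, reduces a common inner divisor to a divisor of the nonzero constant $\overline{b}-\overline{a}$), but it can be shortened: $K_{\theta_1}$ always contains an outer function, namely the reproducing kernel at $0$, since $1-\overline{\theta_1(0)}\theta_1$ has real part at least $1-|\theta_1(0)|>0$, and an outer function cannot lie in $\theta_2H^2$ for a nonconstant inner $\theta_2$. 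With those cosmetic repairs the proof stands on its own.
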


So for all $g\in L^\infty,$ it holds that
\begin{eqnarray} T_{\overline{z}}T_g=T_{\overline{z}g}=T_{g\overline{z}}.\label{overlinez}\end{eqnarray}

For every $h\in \Ker R_n$, it follows that\begin{eqnarray}
  T_g h +\sum_{i=1}^n\la h,u_i\ra v_i=0. \label{Tg1}\end{eqnarray}
Letting $S^*=T_{\overline{z}}$ act on both sides of \eqref{Tg1} and using \eqref{overlinez}, we have $$T_{g\overline{z}}h+\sum_{i=1}^n\la h,u_i\ra S^* v_i=0.$$
Now let $h\in \Ker R_n$ satisfy $h(0)=0$, and then the above equation implies the following equivalent expressions.
\begin{eqnarray}
 &&T_g(\frac{h}{z})+ \sum_{i=1}^n\la h,u_i\ra S^* v_i=0 \label{TgS}\\&\Leftrightarrow& g\frac{h}{z} +\sum_{i=1}^n\la h,u_i\ra  S^*v_i \in \overline{H_0^2}. \label{S*}
 \end{eqnarray}

So the question of nearly $S^*$-invariant $\Ker R_n$ with finite defect is that: \emph{for each $h\in \Ker R_n$ with $h(0)=0$, find a vector $w$ in some suitable finite-dimensional space $F$ such that} \begin{eqnarray*}S^* h+w=\frac{h}{z}+w \in \Ker R_n,\end{eqnarray*}
which is equivalent to the following equations.
 \begin{eqnarray}
 && T_g(\frac{h}{z}+w)+\sum_{i=1}^n\la \frac{h}{z}+w, u_i\ra v_i=0
\label{question}\\ &\Leftrightarrow&  g(\frac{h}{z}+w)+\sum_{i=1}^n\la \frac{h}{z}+w, u_i\ra v_i\in \overline{H_0^2}. \label{questionw} \end{eqnarray}  Next we will construct the defect space $F$ in several important cases.

\subsection{$g=0$ a.e. on $\mathbb{T}$}
In this case, $R_n$ is a rank-$n$ operator and  equation \eqref{question} with $g=0$ implies
\begin{eqnarray*} \Ker R_n=\bigcap_{i=1}^n(\bigvee\{ u_i\})^{\bot} =H^2\ominus(\bigvee \{u_i, i\in A_n\}), \end{eqnarray*}
where $\bigvee$ denotes the closed linear span in $H^2.$

For any $h\in \Ker R_n$ with $h(0)=0,$ it always holds that
\begin{eqnarray*}  S^*h\in \Ker R_n\oplus(\bigvee \{u_i, i\in A_n\}) =H^2,  \end{eqnarray*} which 
gives the following elementary observation on the nearly $S^*$-invariant subspace $\Ker R_n$ with finite defect.
\begin{prop}\label{prop g=0} Suppose $g=0$ almost everywhere on $\mathbb{T}.$ Then the subspace $\Ker R_n$ is nearly $S^*$-invariant with defect $n$ and defect space $$F=\bigvee \{ u_i, \;i\in A_n\}.$$ \end{prop}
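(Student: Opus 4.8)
The plan is to compute $\Ker R_n$ explicitly in this degenerate case and then observe that adjoining the span of the $u_i$ recovers all of $H^2$, so the near $S^*$-invariance with defect $n$ is immediate. First I would specialise the kernel condition \eqref{Tg1} to $g = 0$. Since $T_0 = 0$, a function $h \in H^2$ lies in $\Ker R_n$ precisely when $\sum_{i=1}^n \la h, u_i\ra v_i = 0$. As $\{v_i\}$ is an orthogonal set of nonzero vectors (nonzero because $R_n$ has rank $n$), the coefficients must vanish individually, so $\la h, u_i\ra = 0$ for every $i \in A_n$. Consequently
\[
\Ker R_n = \bigcap_{i=1}^n (\bigvee\{u_i\})^{\bot} = H^2 \ominus \bigvee\{u_i, \; i \in A_n\},
\]
as already recorded before the statement.

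Next I would take $F = \bigvee\{u_i, \; i \in A_n\}$ as the candidate defect space. Since $\{u_i\}$ is orthonormal, these $n$ vectors are linearly independent, whence $\dim F = n$. Moreover, by the computation above $\Ker R_n = H^2 \ominus F$, so that $\Ker R_n \oplus F = H^2$ is an orthogonal direct-sum decomposition of the whole space.

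Finally, for any $h \in \Ker R_n$ with $h(0) = 0$ one has $S^* h = \overline{z}\, h \in H^2 = \Ker R_n + F$, which is exactly the defining inclusion of near $S^*$-invariance relative to the defect space $F$. Together with $\dim F = n$ this yields the claimed defect $n$.

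The point to keep in mind, rather than a genuine obstacle, is that here $F$ is forced to be the entire orthogonal complement of $\Ker R_n$, so the inclusion $S^* h \in \Ker R_n + F$ holds trivially for every $h \in H^2$, not merely for those with $h(0)=0$. The only real content is the dimension count $\dim F = n$, which rests solely on the orthonormality of $\{u_i\}$; no analysis of the action of $S^*$ on individual functions is required in this case.
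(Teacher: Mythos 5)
Your proposal is correct and follows essentially the same route as the paper: compute $\Ker R_n = H^2 \ominus \bigvee\{u_i,\ i\in A_n\}$ from the kernel condition with $T_0=0$, take $F=\bigvee\{u_i,\ i\in A_n\}$, and observe that $S^*h\in H^2=\Ker R_n\oplus F$ trivially. The extra details you supply (the orthogonality of the nonzero $v_i$ forcing each coefficient $\la h,u_i\ra$ to vanish, and the dimension count from orthonormality of $\{u_i\}$) are correct and merely make explicit what the paper leaves implicit.
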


\subsection{$g=\theta$ an inner function}
In this case $T_\theta f=\theta f$ is an isometric multiplication operator on $H^2$. For each $h\in \Ker R_n$ with $h(0)=0,$ the relation \eqref{S*} becomes
\begin{eqnarray} \theta \frac{h}{z} +\sum_{i=1}^n \la h, u_i\ra S^*v_i=0.\label{theta2}\end{eqnarray}
The required relation \eqref{questionw} turns into
\begin{eqnarray*} \theta (\frac{h}{z}+w) +\sum_{i=1}^n  \la \frac{h}{z}+w, u_i\ra v_i=0.   \end{eqnarray*}
Combining it with  \eqref{theta2}, the above equation is equivalent to
\begin{eqnarray}(\theta w-\sum_{k=1}^n \la h, u_k\ra S^*v_k) +\sum_{i=1}^n \la(\theta w-\sum_{k=1}^n \la h, u_k\ra S^*v_k), \theta u_i\ra v_i =0. \quad \label{theta3} \end{eqnarray}
Now choosing $$w= \overline{\theta} (\sum_{k=1}^n \la h, u_k\ra  S^*v_k)=\sum_{k=1}^n \la h, u_k\ra  T_{\overline{\theta}}(S^*v_k)\in H^2,$$ the required equation \eqref{theta3} holds. So we can obtain a theorem on the nearly $S^*$-invariant $\Ker R_n$ with finite defect.

\begin{thm}\label{thm theta1} Suppose $g=\theta$ an inner function. Then  the subspace $\Ker R_n$ is nearly $S^*$-invariant with defect at most $n$ and defect space $$F=\bigvee \{T_{\overline{\theta}}(S^*v_i),\; i\in A_n\}.$$ \end{thm}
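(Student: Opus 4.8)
The plan is to verify the defining property of near $S^*$-invariance head-on: given any $h\in\Ker R_n$ with $h(0)=0$ (so that $\frac{h}{z}=S^*h\in H^2$), I will produce a vector $w$ in the fixed space $F=\bigvee\{T_{\overline\theta}(S^*v_i):i\in A_n\}$ with $\frac{h}{z}+w\in\Ker R_n$; since $F$ is a subspace this yields $S^*h=\frac{h}{z}=(\frac{h}{z}+w)-w\in\Ker R_n+F$, which is exactly what is required. The feature that makes $g=\theta$ tractable is that $T_\theta$ is the isometric multiplication $f\mapsto\theta f$, so the various ``$\cdots\in\overline{H_0^2}$'' conditions attached to $\Ker R_n$ upgrade to genuine identities in $H^2$; in particular the membership \eqref{questionw} becomes the exact equation $\theta(\frac{h}{z}+w)+\sum_{i=1}^n\la\frac{h}{z}+w,u_i\ra v_i=0$ for $w\in H^2$.

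Next I would feed in the constraint \eqref{theta2}, namely $\theta\frac{h}{z}=-\sum_{k=1}^n\la h,u_k\ra S^*v_k$, and exploit the isometry of multiplication by $\theta$ in the form $\la\frac{h}{z}+w,u_i\ra=\la\theta(\frac{h}{z}+w),\theta u_i\ra$. Both substitutions express everything through the single vector $\xi:=\theta w-\sum_{k=1}^n\la h,u_k\ra S^*v_k=\theta(\frac{h}{z}+w)$, turning the target identity into \eqref{theta3}, i.e. $\xi+\sum_{i=1}^n\la\xi,\theta u_i\ra v_i=0$. It therefore suffices to arrange $\xi=0$, which amounts to solving $\theta w=\sum_{k=1}^n\la h,u_k\ra S^*v_k$ for $w$.

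The crux---and the step I expect to demand the most care---is to show this equation is solvable with $w$ ranging over a single $h$-independent finite-dimensional space. Here \eqref{theta2} is used a second time, now read as the statement that $\sum_{k=1}^n\la h,u_k\ra S^*v_k=-\theta\frac{h}{z}$ already lies in $\theta H^2$. This is precisely what guarantees that $w=\overline\theta\sum_{k=1}^n\la h,u_k\ra S^*v_k$ is a bona fide $H^2$ function, even though $\overline\theta$ times an $H^2$ element need not be analytic and the separate pieces $\overline\theta S^*v_k$ need not lie in $H^2$. Because the whole combination is already in $H^2$, the projection $P$ acts as the identity on it, and linearity then lets me distribute the projection term by term to obtain $w=\sum_{k=1}^n\la h,u_k\ra T_{\overline\theta}(S^*v_k)$. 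This exhibits $w$ as an element of $F=\bigvee\{T_{\overline\theta}(S^*v_i):i\in A_n\}$, a space spanned by $n$ vectors and hence of dimension at most $n$. With $\xi=0$ the identity \eqref{theta3} holds automatically, so $\frac{h}{z}+w\in\Ker R_n$, and the conclusion follows: $\Ker R_n$ is nearly $S^*$-invariant with defect at most $n$ and defect space $F$.
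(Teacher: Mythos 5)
Your proposal is correct and follows essentially the same route as the paper: reduce to the identity \eqref{theta3}, make the single vector $\xi=\theta w-\sum_k\la h,u_k\ra S^*v_k$ vanish by taking $w=\overline{\theta}\sum_k\la h,u_k\ra S^*v_k$, and use \eqref{theta2} to see that this $w$ lies in $H^2$ and equals $\sum_k\la h,u_k\ra T_{\overline{\theta}}(S^*v_k)\in F$. Your explicit remark that the individual terms $\overline{\theta}S^*v_k$ need not be analytic, while the full combination is, makes precise a point the paper passes over silently.
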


\begin{exm}\label{remm} For  $g(z)=z^m$  ($m\in \mathbb{N}$), $\Ker R_n$ is nearly $S^*$-invariant with defect at most $n$ and defect space $F=\bigvee \{(S^*)^{m+1} (v_i),\; i\in A_n\}$. \end{exm}

\subsection{$g=f_1\overline{f_2}$ with $f_j \in \mathcal{G}H^\infty$ for $j=1,2$.} Here $\mathcal{G}H^\infty$ denotes the set of all invertible elements in $H^\infty.$ In \cite{Bour}, Bourgain   proved: \emph{If $g$ is a bounded measurable function on $\mathbb{T},$ then the condition $\int_\pi \log|g| dm >-\infty$ $(m$ is the normalized invariant measure on $\mathbb{T})$ is the necessary and sufficient condition for $g\neq 0$ to be of the form $g=f_1\cdot \overline{f_2}$ where $f_1, f_2\in H^\infty$.} The interested reader can also refer to \cite[Theorem 4.1]{Ba} for a matricial version with norm estimates. In this subsection, we suppose $f_j \in \mathcal{G}H^\infty$ for $j=1,2$, and then Theorem \ref{thm infinity} ensures that $T_{f_1\overline{f_2}}=T_{\overline{f_2}}T_{f_1}.$ \vspace{0.1mm}

For each $h\in \Ker R_n$ with $h(0)=0,$ \eqref{TgS} can be rewritten as
\begin{eqnarray} T_{\overline{f_2}}T_{f_1}(\frac{h}{z}) +\sum_{i=1}^n\la h,u_i\ra S^*v_i=0, \label{f*12}\end{eqnarray}
which together with Theorem \ref{thm infinity} imply \begin{eqnarray} \frac{h}{z} +\sum_{i=1}^n\la h,u_i\ra T_{f_1^{-1}}T_{\overline{f_2}^{-1}}(S^*v_i)=0. \label{Tf*12}\end{eqnarray}
The required equation \eqref{question} is changed into \begin{eqnarray*}T_{\overline{f_2}}T_{f_1}(\frac{h}{z}+w) +\sum_{i=1}^n\la \frac{h}{z}+w,u_i\ra v_i=0, \end{eqnarray*}
which, by \eqref{f*12}, is equivalent to
\begin{eqnarray*}&& T_{\overline{f_2}}T_{f_1}w-\sum_{k=1}^n\la h,u_k\ra S^*v_k+\sum_{i=1}^n\la \frac{h}{z}+w,u_i\ra v_i=0.\end{eqnarray*}
Now choosing
\begin{eqnarray*} w=\sum_{k=1}^n\la h,u_k\ra  T_{f_1^{-1}}T_{\overline{f_2}^{-1}}(S^*v_k)\end{eqnarray*} and using \eqref{Tf*12}, the  result follows. Hence we can present a theorem on the nearly $S^*$-invariant $\Ker R_n$ with finite defect.

\begin{thm}\label{thm fj} Suppose $g=f_1\overline{f_2}$ with $f_j\in \mathcal{G}H^\infty$ for $j=1,2$. Then the subspace $\Ker R_n$ is nearly $S^*$-invariant with defect at most $n$ and defect space $$F=\bigvee \{ T_{f_1^{-1}} T_{\overline{f_2}^{-1}} (S^*v_i),\; i\in A_n\}.$$ \end{thm}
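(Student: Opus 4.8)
The plan is to follow the general reduction already established for $\Ker R_n$: given $h\in\Ker R_n$ with $h(0)=0$, I must produce a vector $w$ in a space $F$ of dimension at most $n$ so that $\frac{h}{z}+w\in\Ker R_n$, that is, so that \eqref{question} holds. The feature that makes this case clean is that, in contrast with the inner symbol $\theta$ (for which $T_\theta$ is merely an isometry), the symbol $g=f_1\overline{f_2}$ with $f_j\in\mathcal{G}H^\infty$ yields a boundedly invertible Toeplitz operator. So my first step is to record the factorization $T_g=T_{\overline{f_2}}T_{f_1}$, which Theorem \ref{thm infinity} licenses because the conjugate of $\overline{f_2}$, namely $f_2$, lies in $H^\infty$ (equivalently $f_1\in H^\infty$).

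Second, I would establish invertibility of each factor and compute the inverse explicitly. Since $f_1\in\mathcal{G}H^\infty$, both $f_1$ and $f_1^{-1}$ belong to $H^\infty$, so Theorem \ref{thm infinity} gives $T_{f_1}T_{f_1^{-1}}=T_{f_1^{-1}}T_{f_1}=T_1=I$; applying the same reasoning to $\overline{f_2}$ and its reciprocal shows $T_{\overline{f_2}}$ is invertible with inverse $T_{\overline{f_2}^{-1}}$. Hence $T_g$ is invertible with $T_g^{-1}=T_{f_1^{-1}}T_{\overline{f_2}^{-1}}$. I expect this to be where the real content sits: the hypothesis $f_j\in\mathcal{G}H^\infty$ (genuine invertibility in $H^\infty$, not mere membership) is exactly what forces the Toeplitz products to collapse through Theorem \ref{thm infinity}, and it is what distinguishes this case from the inner case, where no such inverse lives inside $H^2$.

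Third, I would let $S^*=T_{\overline{z}}$ act on the kernel identity \eqref{Tg1} and use \eqref{overlinez} to reach \eqref{f*12}, namely $T_{\overline{f_2}}T_{f_1}(\frac{h}{z})+\sum_{i=1}^n\la h,u_i\ra S^*v_i=0$. Applying $T_g^{-1}=T_{f_1^{-1}}T_{\overline{f_2}^{-1}}$ to this identity then solves explicitly for $S^*h=\frac{h}{z}$ and yields \eqref{Tf*12}:
$$\frac{h}{z}+\sum_{i=1}^n\la h,u_i\ra T_{f_1^{-1}}T_{\overline{f_2}^{-1}}(S^*v_i)=0.$$
Because each $T_{f_1^{-1}}T_{\overline{f_2}^{-1}}(S^*v_i)$ is the image of an $H^2$ function under bounded Toeplitz operators, it already lies in $H^2$; so, unlike the inner case, no separate projection step is needed to keep $w$ inside $H^2$.

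Finally, reading \eqref{Tf*12} directly, I would set $w=\sum_{k=1}^n\la h,u_k\ra T_{f_1^{-1}}T_{\overline{f_2}^{-1}}(S^*v_k)$, so that $\frac{h}{z}+w=0\in\Ker R_n$ automatically, whence $S^*h=-w\in\Ker R_n+F$. Since $w$ lies in $F=\bigvee\{T_{f_1^{-1}}T_{\overline{f_2}^{-1}}(S^*v_i):i\in A_n\}$, a span of at most $n$ vectors, this exhibits the defect space and shows $\Ker R_n$ is nearly $S^*$-invariant with defect at most $n$, completing the argument. As a consistency check one can verify that this $w$ also satisfies the reduced form of \eqref{question} obtained by substituting \eqref{f*12} back in.
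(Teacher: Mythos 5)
Your proof is correct and follows essentially the same route as the paper: factor $T_g=T_{\overline{f_2}}T_{f_1}$ via Theorem \ref{thm infinity}, invert to obtain \eqref{Tf*12}, and take $w=\sum_{k=1}^n\la h,u_k\ra T_{f_1^{-1}}T_{\overline{f_2}^{-1}}(S^*v_k)$. Your observation that \eqref{Tf*12} forces $\frac{h}{z}+w=0$, so that membership in $\Ker R_n$ is automatic, is a slight streamlining of the paper's verification of \eqref{question}, but the decomposition, the key lemma, and the defect space are identical.
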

The following is a remark on two special cases of Theorem \ref{thm fj}.
\begin{rem} $(i)$\; For the operator $R_n$ in \eqref{RnT} with $\overline{g}\in \mathcal{G}H^\infty$, $\Ker R_n$ is nearly $S^*$-invariant with defect at most $n$ and defect space $$F=\bigvee\{ T_{g^{-1}}( S^*v_i),\;i\in A_n  \}.$$
$(ii)$\;For the operator $R_n$ in \eqref{RnT} with $g\in \mathcal{G}H^\infty,$ $\Ker R_n$ is nearly $S^*$-invariant with defect at most $n$ and defect space $$F=\bigvee \{ T_{g^{-1}}(S^* v_i),\; i\in A_n\}.$$ \end{rem}
 \subsection{$g(z)=\overline{\theta(z)}$ with $\theta$ a nonconstant inner function}

In this case, $T_{\overline{\theta}}$ is a special conjugate analytic Toeplitz operator with kernel $K_\theta$. And then the relation \eqref{S*} becomes
 \begin{eqnarray*}
\psi:=\overline{\theta}\frac{h}{z} +\sum_{k=1}^n\la h,u_k\ra S^*v_k \in \overline{H_0^2},
 \end{eqnarray*} with
 \begin{eqnarray}
 \theta\psi=\frac{h}{z} +\sum_{k=1}^n\la h,u_k\ra \theta S^*v_k \in H^2.\label{S*22}
 \end{eqnarray}
  \vspace{1mm}
The desired relation \eqref{questionw} now takes the form
 \begin{eqnarray}
  \overline{\theta}(\frac{h}{z}+w)+\sum_{i=1}^n \la\frac{h}{z}+w,u_i\ra v_i\in \overline{H_0^2},\end{eqnarray}   which, by \eqref{S*22}, is equivalent to
  \begin{eqnarray}
  &&\psi-\sum_{k=1}^n\la h,u_k\ra S^*v_k+\overline{\theta}w \nonumber\\&&\quad+\sum_{i=1}^n\la \psi-\sum_{k=1}^n\la h,u_k\ra S^*v_k+\overline{\theta}w,\overline{\theta}u_i\ra v_i\in \overline{H_0^2}.\quad\quad\label{psi question}\end{eqnarray}
We denote the decompositions of $u_i$ and $\psi$ as below: $u_i=u_{i1}+\theta u_{i2}$ with $u_{i1}=P_{K_\theta}u_i\in K_\theta,\;u_{i2}\in H^2,$  and $\psi=\psi_1+\psi_2$ with $\psi_1\in B:=\bigvee\{\overline{\theta}u_{i1},
\;i\in A_n\}\subset\overline{H_0^2}$ and $\psi_2\in \overline{H_0^2}\ominus B.$ So it is clear that
\begin{eqnarray*}\la \psi_2, \overline{\theta} u_i\ra=0\;\mbox{and} \; \la \psi_2, u_{i2}\ra=0\;\mbox{for all}\;i\in A_n.\end{eqnarray*}
The above indicates that \eqref{psi question} is equivalent to
\begin{eqnarray*}&&(\psi_1-\sum_{k=1}^n\la h,u_k\ra S^*v_k+\overline{\theta}w)\nonumber \\&&\quad+\sum_{i=1}^n\la(\psi_1-\sum_{k=1}^n
\la h,u_k\ra S^*v_k+\overline{\theta}w),\overline{\theta}u_{i1}+u_{i2}\ra v_i \in \overline{H_0^2}. \quad\end{eqnarray*}
Choosing \begin{eqnarray*} w=\sum_{k=1}^n\la h,u_k\ra \theta S^*v_k-\theta\psi_1,\end{eqnarray*} the above desired relation is true and the defect space $F$ is
\begin{eqnarray*} F=\bigvee\{\theta S^*v_i, P_{K_\theta}u_i, i\in A_n\}=\bigvee\{\theta S^*v_i, P_{K_\theta}u_k,  i\in A_n,\;k\in \Lambda\},\end{eqnarray*} where $\Lambda$ denotes the subset of $A_n$ consisting of all $k\in A_n$ such that $P_{K_\theta}u_k\neq 0$, i.e. $\theta\nmid u_k$. So in conclusion we have the following theorem.

\begin{thm}\label{thm oltheta}Suppose $g(z)=\overline{\theta(z)}$ with $\theta$ an inner function. Then the subspace $\Ker R_n$ is nearly $S^*$-invariant with defect at most $n+ |\Lambda|$ and defect space $$F=\bigvee \{\theta S^* v_i, \; P_{K_\theta}u_k,\;i\in A_n,\;k\in \Lambda\},$$ with $\Lambda\subset A_n$ consisting of all $k\in A_n$ such that $\theta\nmid u_k$.
 \end{thm}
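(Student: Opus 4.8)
The plan is to follow the general reduction established at the start of the section and specialize it to $g=\overline\theta$, exploiting the fact that $T_{\overline\theta}$ annihilates exactly the model space $K_\theta$. First I would fix $h\in\Ker R_n$ with $h(0)=0$ and record the specialization of \eqref{S*}, namely that $\psi:=\overline\theta\,\tfrac{h}{z}+\sum_{k=1}^n\la h,u_k\ra S^*v_k$ lies in $\overline{H_0^2}$; multiplying through by the unimodular inner function $\theta$ then produces the $H^2$-identity \eqref{S*22}, which expresses $\tfrac{h}{z}$ in terms of $\theta\psi$ and the data $\theta S^*v_k$. The goal is to produce $w$ in a finite-dimensional space $F$ with $\tfrac{h}{z}+w\in\Ker R_n$, i.e.\ to satisfy \eqref{questionw}.

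The central manipulation is to rewrite $\tfrac{h}{z}+w=\theta\big(\psi-\sum_k\la h,u_k\ra S^*v_k+\overline\theta w\big)$ using \eqref{S*22}, so that each coefficient $\la \tfrac{h}{z}+w,u_i\ra$ becomes $\la\,\cdot\,,\overline\theta u_i\ra$ after transferring $\theta$ across the inner product; this is exactly the reformulation \eqref{psi question}. At this point the non-analyticity of $\overline\theta$ forces the model space to enter: I would split each $u_i=u_{i1}+\theta u_{i2}$ along $H^2=K_\theta\oplus\theta H^2$ with $u_{i1}=P_{K_\theta}u_i$, and split $\psi=\psi_1+\psi_2$ with $\psi_1\in B:=\bigvee\{\overline\theta u_{i1}:i\in A_n\}$ and $\psi_2\perp B$. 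The key orthogonality observation is that $\la\psi_2,\overline\theta u_i\ra=0$ for every $i$: the $\overline\theta u_{i1}$-part vanishes because $\psi_2\perp B$, while the $u_{i2}$-part vanishes because $\psi_2\in\overline{H_0^2}$ is orthogonal to $H^2\ni u_{i2}$. Since moreover $\psi_2\in\overline{H_0^2}$ may be discarded from the leading term without affecting membership in $\overline{H_0^2}$, the requirement \eqref{psi question} collapses to the same condition with $\psi$ replaced by $\psi_1$.

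With this reduction in hand, the choice of $w$ is natural: taking $w=\sum_{k=1}^n\la h,u_k\ra\,\theta S^*v_k-\theta\psi_1$ makes the bracketed quantity $\psi_1-\sum_k\la h,u_k\ra S^*v_k+\overline\theta w$ vanish identically (using $\theta\overline\theta=1$ a.e.\ on $\mathbb{T}$), so the target relation holds trivially. It then remains to confirm $w\in H^2$ and to read off $F$: the terms $\theta S^*v_k$ lie in $H^2$ since $\theta\in H^\infty$, and $\theta\psi_1\in\bigvee\{\theta\overline\theta u_{i1}\}=\bigvee\{P_{K_\theta}u_i\}\subset K_\theta$, so $w$ belongs to $\bigvee\{\theta S^*v_i,\,P_{K_\theta}u_k:i\in A_n,\ k\in\Lambda\}$, where only those $u_k$ with $P_{K_\theta}u_k\ne0$ (equivalently $\theta\nmid u_k$) contribute. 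This yields the defect bound $n+|\Lambda|$.

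I expect the main obstacle to be precisely the appearance of $K_\theta$: because $T_{\overline\theta}$ is not left-invertible one cannot simply ``divide by $\theta$'' as in the $f_1\overline{f_2}$ case, and the coefficients $\la\tfrac{h}{z}+w,u_i\ra$ cannot be governed by $w$ alone. The decomposition $\psi=\psi_1+\psi_2$ together with the twofold orthogonality $\la\psi_2,\overline\theta u_i\ra=0$ is what renders the problem finite-dimensional, and the careful bookkeeping of which $u_k$ survive projection onto $K_\theta$ is what produces the sharp extra term $|\Lambda|$ rather than a crude $n$ in the defect.
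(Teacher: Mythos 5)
Your proposal is correct and follows essentially the same route as the paper's own argument: the same reformulation via $\psi$ and \eqref{S*22}, the same decompositions $u_i=P_{K_\theta}u_i+\theta u_{i2}$ and $\psi=\psi_1+\psi_2$ with the twofold orthogonality $\la\psi_2,\overline{\theta}u_i\ra=0$, and the same choice $w=\sum_k\la h,u_k\ra\theta S^*v_k-\theta\psi_1$. No gaps; the verification that $w$ lies in $\bigvee\{\theta S^*v_i,\,P_{K_\theta}u_k\}$ and the resulting bound $n+|\Lambda|$ match the paper exactly.
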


\section{The application of the C-G-P theorem}\label{sec:4}
In this section, we apply a recent theorem (for short the C-G-P Theorem) by Chalendar--Gallardo--Partington to represent the kernels of rank one perturbations of Toeplitz operators in terms of backward shift-invariant subspaces. We shall take $n=1$ and denote the operator \begin{eqnarray*}R_1 f=T_g f +\la f, u\ra v \end{eqnarray*} with $\|u\|=1$ and $S^* v\neq 0.$  First we cite the C-G-P Theorem on nearly $S^*$-invariant subspaces with defect $m$ from \cite{CGP}.

\begin{thm}\cite[Theorem 3.2]{CGP}\label{CGP}\;Let $M$ be a closed subspace that is nearly $S^*$-invariant with defect $m.$ Then

$(1)$ \;in the case where there are functions in $M$ that do not vanish at $0,$ then
$$M=\{f: \;f(z)=k_0(z)f_0(z)+z\sum_{j=1}^mk_j(z)e_j(z):\;(k_0,\cdots,k_m)\in K\},$$ where $f_0$ is the normalized reproducing kernel for $M$ at $0,$\;$\{e_1,\cdots, e_m\}$ is any orthonormal basis for the defect space $F$, and $K$ is a closed $S^*\oplus \cdots\oplus S^*$-invariant subspace of the vector-valued Hardy space $H^2(\mathbb{D}, \mathbb{C}^{m+1}),$ and $\|f\|^2=\sum_{j=0}^m\|k_j\|^2.$

$(2)$\;in the case where all functions in $M$ vanish at $0,$ then
$$M=\{f: \;f(z)=z\sum_{j=1}^mk_j(z)e_j(z):\;(k_1,\cdots,k_m)\in K\},$$ with the same notation as in $(1),$ except that $K$ is now a closed $S^*\oplus \cdots\oplus S^*$-invariant subspace of the vector-valued Hardy space $H^2(\mathbb{D}, \mathbb{C}^{m}),$ and $\|f\|^2=\sum_{j=1}^m\|k_j\|^2.$\end{thm}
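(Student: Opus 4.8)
The plan is to attach to every $f\in M$ a ``generalized Taylor expansion'' along the backward shift, read off from it the coordinate functions $k_0,\dots,k_m$, and then recognise the assignment $f\mapsto(k_0,\dots,k_m)$ as an isometry of $M$ onto a closed $S^*\oplus\cdots\oplus S^*$-invariant subspace $K$ of $H^2(\mathbb{D},\mathbb{C}^{m+1})$. First I would normalise the data. Replacing the defect space by $(M+F)\ominus M$ I may assume $F\perp M$, $\dim F\le m$, and $S^*f\in M\oplus F$ for every $f\in M$ with $f(0)=0$; fix an orthonormal basis $e_1,\dots,e_m$ of $F$. In case $(1)$ the functional $f\mapsto f(0)$ is bounded and nonzero on the closed subspace $M$, so $M$ has a nonzero reproducing kernel at $0$; let $f_0$ be its normalisation, so that $\|f_0\|=1$, $f_0(0)>0$, and $f_0\perp M_0$, where $M_0:=\{f\in M:f(0)=0\}$.

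Next I would iterate. Put $f^{(0)}=f$ and define recursively $a_i=f^{(i)}(0)/f_0(0)$, so that $f^{(i)}-a_if_0\in M_0$; near $S^*$-invariance with defect then gives $S^*(f^{(i)}-a_if_0)=f^{(i+1)}+\sum_{j=1}^m c_j^{(i)}e_j$ with $f^{(i+1)}\in M$ and $c_j^{(i)}\in\mathbb{C}$. One step reads $f^{(i)}=a_if_0+zf^{(i+1)}+z\sum_j c_j^{(i)}e_j$, and iterating $N$ times gives
\[ f=f_0\sum_{i<N}a_iz^i+z\sum_{j=1}^m e_j\sum_{i<N}c_j^{(i)}z^i+z^Nf^{(N)}. \]
Set $k_0(z)=\sum_i a_iz^i$ and $k_j(z)=\sum_i c_j^{(i)}z^i$. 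Because $f_0\perp M_0$ with $\|f_0\|=1$, multiplication by $z$ is isometric, and $f^{(i+1)}\in M\perp F$ with $\{e_j\}$ orthonormal, each step splits orthogonally: $\|f^{(i)}\|^2=|a_i|^2+\|f^{(i+1)}\|^2+\sum_j|c_j^{(i)}|^2$. Telescoping yields $\|f\|^2=\sum_{i<N}(|a_i|^2+\sum_j|c_j^{(i)}|^2)+\|f^{(N)}\|^2$, whence the coefficients are square-summable, $k_0,\dots,k_m\in H^2$, and (using $|z^Nf^{(N)}(w)|\le|w|^N\|f\|(1-|w|^2)^{-1/2}\to0$ on $\mathbb{D}$) the expansion $f=k_0f_0+z\sum_j k_je_j$ holds pointwise on $\mathbb{D}$, with $\sum_{j=0}^m\|k_j\|^2\le\|f\|^2$.

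The hard part will be the norm equality $\|f\|^2=\sum_{j=0}^m\|k_j\|^2$, equivalently the strong stability $\|f^{(N)}\|\to0$; the telescoping alone leaves a nonnegative residual $L=\lim_N\|f^{(N)}\|^2$, and weak-convergence arguments do not by themselves force $L=0$. I would attack it through the recursion $f^{(i+1)}=S^*f^{(i)}-\Phi_i$, where $\Phi_i:=a_iS^*f_0+\sum_j c_j^{(i)}e_j$ lies in the fixed finite-dimensional space $\bigvee\{S^*f_0,e_1,\dots,e_m\}$ and satisfies $\sum_i\|\Phi_i\|^2<\infty$. Unwinding gives $f^{(N)}=S^{*N}f-\sum_{i<N}S^{*(N-1-i)}\Phi_i$; the term $S^{*N}f$ tends to $0$ because $S^*$ is strongly stable on $H^2$, and the remaining convolution must be shown to vanish using the square-summability of the $\Phi_i$ together with $S^{*\ell}\to0$ on the finite-dimensional space carrying them. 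This is the analytic heart of the statement and is exactly the point where the extremal (reproducing-kernel) character of $f_0$ is used, precisely as in the classical Hitt--Sarason theorem in the defect-$0$ case.

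Finally I would package the result. Let $K$ be the image of the now-isometric map $f\mapsto(k_0,\dots,k_m)$; it is closed because $M$ is complete and the map isometric. Running the recursion from $f^{(1)}\in M$ in place of $f$ shifts every coefficient sequence by one index, sending $(k_0,\dots,k_m)$ to $(S^*k_0,\dots,S^*k_m)$, so $K$ is invariant under $S^*\oplus\cdots\oplus S^*$; the displayed description of $M$ is then immediate from the expansion and the definition of $K$. Case $(2)$, where $M=M_0$ so that no function is normalised at $0$, is the same argument with the term $k_0f_0$ deleted and coordinates $(k_1,\dots,k_m)\in K\subset H^2(\mathbb{D},\mathbb{C}^{m})$; there $f_0$ plays no role since $f(0)=0$ throughout the iteration.
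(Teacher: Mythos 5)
First, a contextual point: the paper does not prove this statement at all --- it is quoted verbatim from reference \cite{CGP} (Chalendar--Gallardo--Partington) and used as a black box. So the relevant comparison is with the proof in \cite{CGP}, which is indeed the Hitt--Sarason expansion scheme you describe: your normalisation $F\perp M$, the recursion $f^{(i)}=a_if_0+zf^{(i+1)}+z\sum_jc_j^{(i)}e_j$, the telescoped identity $\|f\|^2=\sum_{i<N}\bigl(|a_i|^2+\sum_j|c_j^{(i)}|^2\bigr)+\|f^{(N)}\|^2$, the pointwise convergence of the expansion, and the shift-by-one argument for the $S^*\oplus\cdots\oplus S^*$-invariance of $K$ are all correct and are exactly the right skeleton.

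The genuine gap is the isometry $\|f\|^2=\sum_{j}\|k_j\|^2$, equivalently $\|f^{(N)}\|\to0$, which you correctly single out as the analytic heart but do not actually prove. Your proposed mechanism --- write $f^{(N)}=S^{*N}f-\sum_{i<N}S^{*(N-1-i)}\Phi_i$ and kill the convolution using $\sum_i\|\Phi_i\|^2<\infty$ together with strong stability of $S^*$ on the finite-dimensional space carrying the $\Phi_i$ --- cannot work as stated. Reducing to a single basis vector $\psi$ of that space, the quantity to be estimated is
$$\Bigl\|\sum_{i<N}\alpha_i S^{*(N-1-i)}\psi\Bigr\|^2=\sum_{k\ge0}\Bigl|\sum_{i<N}\alpha_i\widehat{\psi}(k+N-1-i)\Bigr|^2,$$
i.e.\ an $\ell^2$-tail of (partial sums of) the convolution of two $\ell^2$ sequences. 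Such a convolution is the coefficient sequence of a product of two $H^2$ functions, which in general lies only in $H^1$; its $\ell^2$-tails need not tend to zero and need not even be finite. (Concretely: the individual pieces $k_0f_0$ and $k_je_j$ of the expansion are a priori only in $H^1$, and only their specific combination is known to lie in $H^2$.) So square-summability plus strong stability gives at best weak or locally uniform convergence of $f^{(N)}$ to $0$, which you already have, and which is compatible with $\|f^{(N)}\|\to L>0$ (compare $z^N\rightharpoonup0$ with $\|z^N\|=1$). Closing this requires a genuinely different idea exploiting the extremal property of $f_0$ and the structure of $M$, as in Hitt's and Sarason's proofs of the defect-zero case; invoking that this happens ``precisely as in Hitt--Sarason'' is a placeholder, not an argument. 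Since the closedness of $K$ and the norm formula in the statement both rest on this isometry, the proposal as written does not establish the theorem.
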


The following proposition asserts that the kernels of some Toeplitz operators with special symbols are model spaces.

\begin{prop}\cite[Proposition 5.8]{GMR} \label{Hinfinity}\;Let $\varphi\in H^\infty\setminus\{0\}$ and let $\eta$ be the inner factor of $\varphi,$ then $$\Ker T_{\overline{\varphi}}=K_\eta.$$\end{prop}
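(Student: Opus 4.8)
The plan is to reduce the statement to the fact, already recorded in the introduction, that $\Ker T_{\overline{\eta}}=K_\eta$ for an inner function $\eta$, by stripping off the outer part of $\varphi$ using the multiplication theorem (Theorem \ref{thm infinity}). First I would invoke the inner--outer factorization $\varphi=\eta u$, where $\eta$ is the inner factor and $u$ is outer. Since $\varphi\in H^\infty$ and $|\eta|=1$ almost everywhere on $\mathbb{T}$, one has $|u|=|\varphi|$ a.e., so the outer factor satisfies $u\in H^\infty$ as well; this is what lets the Toeplitz product formula apply.

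The key algebraic step is to factor $T_{\overline{\varphi}}$ as a product of Toeplitz operators with the outer part on the outside. Applying Theorem \ref{thm infinity} to $T_{\overline{u}}T_{\overline{\eta}}$, and noting that the conjugate of the left symbol is $u\in H^\infty$, gives $T_{\overline{u}}T_{\overline{\eta}}=T_{\overline{u}\,\overline{\eta}}=T_{\overline{\varphi}}$. Consequently, for every $f\in H^2$ we have $T_{\overline{\varphi}}f=T_{\overline{u}}\bigl(T_{\overline{\eta}}f\bigr)$, so that $f\in\Ker T_{\overline{\varphi}}$ precisely when $T_{\overline{\eta}}f\in\Ker T_{\overline{u}}$. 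Getting this order of factors correct is essential to the argument.

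It then remains to show $\Ker T_{\overline{u}}=\{0\}$, and this is where the outer hypothesis does the work. Since $u\in H^\infty$, the operator $T_u$ is simply multiplication by $u$, so its range equals $uH^2$; because $u$ is outer, $uH^2$ is dense in $H^2$ by Beurling's theorem. Hence $\Ker T_{\overline{u}}=\Ker T_u^{*}=(uH^2)^{\perp}=\{0\}$, so $T_{\overline{u}}$ is injective. Combining this with the previous paragraph, $f\in\Ker T_{\overline{\varphi}}$ if and only if $T_{\overline{\eta}}f=0$, that is, $\Ker T_{\overline{\varphi}}=\Ker T_{\overline{\eta}}=K_\eta$.

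The main obstacle is really the injectivity of $T_{\overline{u}}$: everything else is bookkeeping with the factorization and the product formula. That injectivity is nothing but a restatement of the outerness of $u$ through the density of $uH^2$, so the crux of the proof is to recognize that the outer factor contributes trivial kernel while the inner factor contributes exactly the model space. I would present the inclusion $K_\eta\subseteq\Ker T_{\overline{\varphi}}$ directly as a sanity check if desired, but the factorization route yields both inclusions simultaneously and is cleaner.
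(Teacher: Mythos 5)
Your argument is correct: the factorization $T_{\overline{\varphi}}=T_{\overline{u}}T_{\overline{\eta}}$ via Theorem \ref{thm infinity}, the injectivity of $T_{\overline{u}}$ from the density of $uH^2$ for outer $u$, and the identity $\Ker T_{\overline{\eta}}=K_\eta$ together give exactly the claim. The paper itself supplies no proof here (it only cites \cite{GMR}), and your route is the standard one used there, so there is nothing to add beyond noting that the passage from $|u|=|\varphi|$ a.e.\ to $u\in H^\infty$ implicitly uses that a Smirnov-class (here, outer) function with bounded boundary values is bounded, which is harmless.
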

Now we apply the C-G-P Theorem to represent $\Ker R_1$ by backward shift-invariant subspaces in several important cases. Note that we can find $K$ as the largest $S^*$-invariant subspace such that  $$S^{*n} k_0f_0+z\sum_{j=1}^m S^{*n} k_j e_j\in M \quad \hbox{or} \quad z\sum_{j=1}^m S^{*n} k_j e_j\in M$$
for all $n\in \mathbb{N}.$
\subsection{$g=0$ a.e. on $\mathbb{T}$} In this case $M=\Ker R_1=H^2\ominus \bigvee\{u\},$ which is a vector hyperplane. It is clear that such a hyperplane is the solution of a single linear equation. Also Proposition \ref{prop g=0} showed that $\Ker R_1$ is nearly $S^*$-invariant with a $1$-dimensional defect space $F=\bigvee\{ u\}.$ Using Theorem \ref{CGP}, we deduce a corollary on the representations of $\Ker R_1$.

\begin{cor}\label{cor g=0} Given a nearly $S^*$-invariant subspace $M=H^2\ominus \bigvee\{u\}$ with defect $1$, let $f_0=P_M 1=1-\overline{u(0)} u,$ $v_0=P(u-u(0)|u|^2)$ and $v_1=P(\overline{z}|u|^2).$ Then

$(1)$ in the case $P_M 1\neq 0,$ we have
$$M=\{f:\;f=k_0f_0 +k_1zu:\; (k_0, k_1)\in K\},$$ with an $S^*\oplus S^*$-invariant subspace $K=\{(k_0,k_1): \la k_0,z^n v_0\ra+\la k_1, z^n v_1\ra=0 \;\mbox{for}\;n\in \mathbb{N}\}$.

$(2)$ in the case $P_M 1= 0,$ we have $$M=\{f:\;f=k_1zu:\;  k_1\in K\},$$ with an $S^*$-invariant subspace $K=\{ k_1:\;\la  k_1, z^n v_1\ra=0 \;\mbox{for}\;n\in \mathbb{N}\}$.
\end{cor}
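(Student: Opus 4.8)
The plan is to recognize the corollary as a concrete instance of the C--G--P Theorem (Theorem \ref{CGP}) and then to pin down the abstract invariant subspace $K$ by translating its defining membership conditions into scalar orthogonality relations. First I would record the structural data: by Proposition \ref{prop g=0} with $n=1$, the subspace $M=H^2\ominus\bigvee\{u\}$ is nearly $S^*$-invariant with defect $1$ and defect space $F=\bigvee\{u\}$; since $\|u\|=1$, the singleton $\{e_1\}=\{u\}$ is an orthonormal basis for $F$. The reproducing kernel of $M$ at $0$ is $P_M 1=1-\langle 1,u\rangle u=1-\overline{u(0)}\,u$, using $\langle 1,u\rangle=\overline{u(0)}$, and this is the $f_0$ in the statement. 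Observing that $P_M 1\neq 0$ holds exactly when some $f\in M$ has $f(0)=\langle f,1\rangle\neq 0$, the two cases of the corollary correspond precisely to parts $(1)$ and $(2)$ of Theorem \ref{CGP}, which immediately yield the claimed parametric forms $f=k_0 f_0+k_1 z u$ and $f=k_1 z u$.

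It then remains to identify $K$. Using the characterization quoted before this subsection, $K$ is the largest $S^*$-invariant subspace of coefficient vectors for which the associated function, after applying $S^{*n}$ to each coordinate, lies in $M$ for every $n$. Since $M=\{f:\langle f,u\rangle=0\}$, membership collapses to a single orthogonality condition, so in case $(1)$ one needs
$$\langle (S^{*n}k_0) f_0+z(S^{*n}k_1)u,\;u\rangle=0 \qquad (n\geq 0).$$
The crux is to rewrite each term as a pairing against a fixed vector. Moving the analytic factors across the $L^2$ pairing by conjugation and projecting back onto $H^2$ gives $\langle (S^{*n}k_0)f_0,u\rangle=\langle S^{*n}k_0,\,P(\overline{f_0}\,u)\rangle$ and $\langle z(S^{*n}k_1)u,u\rangle=\langle S^{*n}k_1,\,P(\overline{z}\,|u|^2)\rangle$. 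A direct computation of $\overline{f_0}\,u=u-u(0)|u|^2$ then identifies the first inner vector as $v_0=P(u-u(0)|u|^2)$ and the second as $v_1=P(\overline{z}\,|u|^2)$, matching the statement.

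Finally I would discharge the $S^{*n}$ via the adjoint relation $\langle S^{*n}k,v\rangle=\langle k,S^n v\rangle=\langle k,z^n v\rangle$, turning the displayed condition into $\langle k_0,z^n v_0\rangle+\langle k_1,z^n v_1\rangle=0$ for all $n$, which is exactly the description of $K$ in case $(1)$; case $(2)$, where $f_0=P_M 1=0$ forces the $k_0$-term to vanish, follows by the same computation with only the $v_1$-condition surviving. The main obstacle I anticipate is purely bookkeeping in that middle step: one must carefully track which factors are analytic (and hence unaffected by the projection $P$ inside an $H^2$ pairing) and which must be conjugated when passing between the $L^2$ and $H^2$ inner products, so that $v_0$ and $v_1$ emerge in the precise projected form claimed. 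A minor point worth flagging is that the corollary uses the unnormalized kernel $f_0=P_M 1$ rather than the normalized one in Theorem \ref{CGP}; this only rescales $k_0$ (and correspondingly $v_0$), leaving the set $M$ and the orthogonality description of $K$ intact.
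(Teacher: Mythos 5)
Your proposal is correct and follows essentially the route the paper intends: apply Proposition \ref{prop g=0} to get the defect space $\bigvee\{u\}$, invoke Theorem \ref{CGP}, and identify $K$ as the largest $S^*\oplus S^*$-invariant subspace with $(S^{*n}k_0)f_0+z(S^{*n}k_1)u\perp u$ for all $n$, which after moving factors across the pairing yields exactly $v_0=P(u-u(0)|u|^2)$ and $v_1=P(\overline{z}|u|^2)$. Your closing remarks on the normalization of $f_0$ and on the built-in $S^*$-invariance of the orthogonality conditions (shifting $n$ to $n+1$) are accurate and fill in details the paper leaves implicit.
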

Here we show some examples illustrating the variety of  subspaces $K$ that can occur.\vspace{1.5mm}

\begin{exm}~$(i)$\;Suppose $u=1$, then $ M=zH^2$, $f_0=0$ and $v_0=v_1=0.$ So Corollary \ref{cor g=0} $(2)$ implies $M$ has the representation
  \begin{eqnarray*}M=\{f:\; f= zk_1:\;k_1\in K\} \end{eqnarray*} with $K=H^2$ a trivial $S^*$-invariant subspace.\vspace{1mm}

$(ii)$\;Suppose $u$ is a nonconstant inner function, then $M=K_u\oplus zuH^2$, $f_0=1- \overline{u(0)}u\neq 0$ and $v_0=u-u(0)$, $v_1=0.$ So Corollary \ref{cor g=0} $(1)$ implies $M$ has the representation
\begin{eqnarray*} M=\{f:\; f=k_0(1- \overline{u(0)}u)+k_1zu:\;(k_0,k_1)\in K\} \end{eqnarray*} with an $S^*\oplus S^*$-invariant subspace $ K=K_\eta\times H^2$, where $\eta$ is the inner factor of $ v_0$. Besides, Proposition \ref{Hinfinity} is used to show that $K$ is backward shift-invariant. \vspace{1.0mm}

$(iii)$\;Suppose $u$ is a normalized reproducing kernel of $H^2,$ that is $u(z)=\sqrt{1-|\alpha|^2}(
1-\overline{\alpha}z)^{-1},\;\;\alpha\in \mathbb{D}\setminus\{0\},$  then $M=\{f:\; f(\alpha)=0\}$, $f_0=\overline{\alpha}(\alpha-z)(1-\overline{\alpha}z)^{-1}\neq 0$ and $v_0=0,\;v_1= \overline{\alpha}(1-\overline{\alpha}z)^{-1}$. So Corollary \ref{cor g=0} $(1)$ implies $M$ has the representation
\begin{eqnarray*}M=
\{f:\;f=\overline{\alpha}k_0\frac{\alpha-z}{1-\overline{\alpha}z}+
zk_1\frac{\sqrt{1-|\alpha|^2}}{1-\overline{\alpha}z}:\;(k_0,k_1)\in K\},\end{eqnarray*}
with an $S^*\oplus S^*$-invariant subspace $K=H^2 \times \{0\}$. \vspace{1mm}

$(iv)$ \;Suppose $u(z)=(1+z^k)/\sqrt{2}$ with $k\geq 1$, then $M=\bigvee\{1-z^k, z, \cdots, z^{k-1}, z^{k+1}, z^{k+2},\cdots\},$   $f_0=2^{-1}(1-z^k)\neq 0$ and $v_0=z^k/(2\sqrt{2})$, $v_1=2^{-1}z^{k-1}.$  So Corollary \ref{cor g=0} $(1)$ implies $M$ has the representation
\begin{eqnarray*}M=\{f:\;f=k_0\frac{1-z^k}{2}+zk_1 \frac{1+z^k}{\sqrt{2}}:
\;(k_0,k_1)\in K\} \end{eqnarray*} with an $S^*\oplus S^*$-invariant subspace $K=\{(k_0, k_1): \sqrt{2}(S^{*})^{k-1} k_1=-(S^*)^{k}k_0,\;k_0\in H^2\}$.
\end{exm}

\subsection{$g=\theta$ an inner function} In this case,
$M=\Ker R_1\subset \bigvee\{ \overline{\theta}v\}$. Take
any vector $f=\lambda\overline{\theta} v \in M $ satisfying $R_1 f=0,$ which is equivalent to $\lambda (1+\la \overline{\theta} v, u\ra)=0.$ If $1+\la \overline{\theta}v,u\ra \neq 0$, then $\lambda =0,$ meaning $M=\{0\}$ a trivial $S^*$-invariant subspace. 
So suppose $1+\la \overline{\theta} v, u\ra=0,$ and then $M=\bigvee\{\overline{\theta} v\}$, which is nearly $S^*$-invariant with a $1$-dimensional defect space $F=\bigvee\{ S^*({\overline{\theta}}v)\}$ from Theorem \ref{thm theta1}. 
So Theorem \ref{CGP} implies a corollary on the representations of $\Ker R_1.$
\begin{cor} Given a nearly $S^*$-invariant subspace $M=\bigvee\{\overline{\theta} v\}$ with defect $1$, then

$(1)$ in the case $a_0:=\la \overline{\theta} v, 1\ra\neq 0,$
let $f_0=P_{M}1=\overline{a_0}\|v\|^{-2}\overline{\theta} v,$ we have
 \begin{eqnarray*} M=\{f:\;f= k_0f_0:\;(k_0,0)\in K\},
\end{eqnarray*}  with an $S^*\oplus S^*$-invariant subspace $K=\mathbb{C}\times \{0\}$.\vspace{0.1mm}

$(2)$ in the case $a_0:=\la \overline{\theta} v, 1\ra=0$, we have
\begin{eqnarray*}M =\{f:\;f=\|S^*({\overline{\theta}}v)\|^{-1} k_1 \overline{\theta} v:\;k_1\in K  \}, \end{eqnarray*} with an $S^*$-invariant subspace $K=\mathbb{C}$. \end{cor}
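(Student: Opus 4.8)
The plan is to apply the C-G-P Theorem (Theorem~\ref{CGP}) directly to the one-dimensional space $M=\bigvee\{\overline{\theta}v\}$, using the explicit defect space $F=\bigvee\{S^*(\overline{\theta}v)\}$ produced by Theorem~\ref{thm theta1}. First I would record the elementary data the theorem requires. Since $\theta$ is inner, $|\overline{\theta}v|=|v|$ a.e.\ on $\mathbb{T}$, so $\|\overline{\theta}v\|=\|v\|$; as the defect is $1$ we have $S^*(\overline{\theta}v)\neq0$, and an orthonormal basis of $F$ is $e_1=\|S^*(\overline{\theta}v)\|^{-1}S^*(\overline{\theta}v)$. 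Because every element of $M$ is a scalar multiple of $\overline{\theta}v$, and $\langle f,1\rangle=f(0)$ in $H^2$, the functions in $M$ fail to vanish at $0$ precisely when $a_0:=\langle \overline{\theta}v,1\rangle=(\overline{\theta}v)(0)\neq0$. This is exactly the dichotomy separating parts $(1)$ and $(2)$ of Theorem~\ref{CGP}.

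In case $(1)$, where $a_0\neq0$, I would compute the reproducing kernel directly from the fact that $\overline{\theta}v$ spans $M$:
\[
f_0=P_M 1=\frac{\langle 1,\overline{\theta}v\rangle}{\|\overline{\theta}v\|^2}\,\overline{\theta}v=\frac{\overline{a_0}}{\|v\|^2}\,\overline{\theta}v,
\]
which is the stated $f_0$. Theorem~\ref{CGP}$(1)$ with $m=1$ then yields $M=\{k_0 f_0+z k_1 e_1:\;(k_0,k_1)\in K\}$. The decisive point is that the correspondence $f\leftrightarrow(k_0,k_1)$ is isometric, $\|f\|^2=\|k_0\|^2+\|k_1\|^2$, hence a bijection of $K$ onto $M$; since $\dim M=1$ this forces $\dim K=1$. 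As the constant pair $(1,0)$ produces $f_0\in M$ and spans an $S^*\oplus S^*$-invariant line ($S^*$ annihilates constants), we have $\mathbb{C}\times\{0\}\subseteq K$, and the dimension count gives $K=\mathbb{C}\times\{0\}$.

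Case $(2)$, where $a_0=0$, is handled identically through Theorem~\ref{CGP}$(2)$ with $m=1$, giving $M=\{z k_1 e_1:\;k_1\in K\}$. Here the only computation is to simplify $z e_1$: since $a_0=0$,
\[
S^*(\overline{\theta}v)=\overline{z}\bigl(\overline{\theta}v-(\overline{\theta}v)(0)\bigr)=\overline{z}\,\overline{\theta}v,
\]
so $z S^*(\overline{\theta}v)=\overline{\theta}v$ and $z e_1=\|S^*(\overline{\theta}v)\|^{-1}\overline{\theta}v$, reproducing the displayed formula. Again the isometric correspondence forces $\dim K=1$, the constant $k_1$ gives $z e_1\in M$ and spans an $S^*$-invariant line, so $K=\mathbb{C}$.

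The individual steps are short; the only point requiring genuine care is the dimension argument. The hard part will be justifying that $K$ collapses to the trivial one-dimensional subspace rather than merely containing it, and for this I would lean on the isometry $\|f\|^2=\sum_j\|k_j\|^2$ built into Theorem~\ref{CGP}, which makes $(k_0,k_1)\mapsto k_0 f_0+z k_1 e_1$ injective and hence forces $\dim K=\dim M=1$. One could alternatively verify equality through the ``largest $S^*$-invariant subspace'' description recalled just before this subsection, but the dimension count is cleaner and avoids manipulating the products $k_j\,\overline{\theta}v$ directly.
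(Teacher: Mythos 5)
Your argument is correct and follows the same overall route as the paper: apply Theorem~\ref{CGP} with the defect space $F=\bigvee\{S^*(\overline{\theta}v)\}$ supplied by Theorem~\ref{thm theta1}, compute $f_0=P_M1=\overline{a_0}\|v\|^{-2}\overline{\theta}v$ by projecting onto the one-dimensional span, and then identify $K$. The only divergence is in the last step: the paper substitutes $zk_1e_1=\|S^*(\overline{\theta}v)\|^{-1}k_1(\overline{\theta}v-a_0)$ and solves the resulting equation $k_1(\overline{\theta}v-a_0)=\mu\,\overline{\theta}v$ directly, using $a_0\neq 0$ to force $k_1=0$ (and similarly for part $(2)$), whereas you use the isometry $\|f\|^2=\|k_0\|^2+\|k_1\|^2$ to get $\dim K=\dim M=1$ and combine this with the containment $\mathbb{C}\times\{0\}\subseteq K$. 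Your dimension count is arguably cleaner, but note that the containment $\mathbb{C}\times\{0\}\subseteq K$ quietly relies on the convention, stated at the start of Section~\ref{sec:4}, that $K$ is taken to be the largest admissible $S^*\oplus S^*$-invariant subspace, while the isometry is asserted for the particular $K$ produced by Theorem~\ref{CGP}; the paper's direct substitution avoids having to reconcile these two descriptions. This is a matter of bookkeeping rather than a gap, and both arguments reach the same conclusion.
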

\begin{proof} $(1)$ in this case, using Theorem \ref{CGP} $(1)$, we represent $M$ by \begin{eqnarray*}M &=&\{f:\;f= k_0f_0+ zk_1 \frac{S^*({\overline{\theta}}v)}
{\|S^*({\overline{\theta}}v)\|}:\;(k_0,k_1)\in K\}
\nonumber\\&=&
\{f:\;f=\frac{\overline{a_0}}{\|v\|^2}k_0\overline{\theta} v + \|S^*({\overline{\theta}}v)\|^{-1} k_1  (\overline{{\theta}}v -a_0):\;(k_0, k_1)\in K\}.
\end{eqnarray*} Since $M=\bigvee\{\overline{\theta} v\}$, it yields
\begin{eqnarray*}k_0\in \mathbb{C}\; \;\mbox{and}\; \;\|S^*({\overline{\theta}}v)\|^{-1} k_1 ({\overline{\theta}}v -a_0) =\mu \overline{\theta} v\;\mbox{with $\mu\in \mathbb{C}$},\end{eqnarray*}
which is equivalent to $k_0\in \mathbb{C}$ and $k_1=0$ due to $a_0\neq 0.$  So the statement $(1)$ is true. The statement $(2)$ can be similarly shown by Theorem \ref{CGP} $(2)$.\end{proof}
\subsection{$g=f_1\overline{f_2}$ with $f_j\in \mathcal{G}H^\infty$ for $j=1,2$.}  In this case, $M=\Ker R_1\subset \bigvee\{ f_1^{-1}(T_{\overline{f_2}^{-1}}v) \}.$ Take any vector $f=\lambda f_1^{-1}(T_{\overline{f_2}^{-1}}v) \in M$ such that $R_1 f=0,$ which is equivalent to $\lambda (1+\la f_1^{-1}(T_{\overline{f_2}^{-1}}v), u\ra)=0.$ It is clear $M=\{0\}$ is a trivial $S^*$-invariant subspace for $1+\la f_1^{-1}T_{\overline{f_2}^{-1}}v, u\ra \neq 0$. Now we always assume $1+\la f_1^{-1}(T_{\overline{f_2}^{-1}}v), u\ra=0,$ and obtain   $M=\bigvee\{ f_1^{-1}(T_{\overline{f_2}^{-1}}v)\}$, which is nearly $S^*$-invariant with a $1$-dimensional defect space $F=\bigvee\{ f_1^{-1}T_{\overline{f_2}^{-1}}(S^*v )\}$ from Theorem \ref{thm fj}. Denote the Taylor coefficients of $T_{\overline{f_2}^{-1}}v$ and $f_1^{-1}$ by $\{a_k\}_{k\in \mathbb{N}}$ and $ \{b_k\}_{k\in \mathbb{N}}$, respectively. So $\la f_1^{-1}T_{\overline{f_2}^{-1}}v, 1 \ra=a_0b_0$, and using Theorem \ref{CGP}, we deduce a corollary on the representations of $\Ker R_1$.

\begin{cor} Given a nearly $S^*$-invariant subspace\\  $M=\bigvee\{f_1^{-1}(T_{\overline{f_2}^{-1}}v)\}$ with defect $1$, then

$(1)$ in the case $a_0b_0\neq 0,$ let $f_0=P_M1=\overline{a_0b_0} \|f_1^{-1} T_{\overline{f_2}^{-1}}v\|^{-2} f_1^{-1}T_{\overline{f_2}^{-1}}v$; then we have
\begin{eqnarray*}
M=\{f:\;f=k_0f_0:\;(k_0,0)\in K\},\end{eqnarray*}
with an $S^*\oplus S^*$-invariant subspace $K=\mathbb{C}\times \{0\}$.

$(2)$ in the case $a_0b_0=0,$ we have \begin{eqnarray*}M=\{f:\;f= k_1\frac{ f_1^{-1} T_{\overline{f_2}^{-1}} v }{\|  f_1^{-1}T_{\overline{f_2}^{-1}}(S^*v)\|}:\;k_1\in K\}\end{eqnarray*} with  $K=\mathbb{C}$ an $S^*$-invariant subspace.\end{cor}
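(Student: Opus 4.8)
The plan is to mimic the proof of the preceding ($g=\theta$) corollary, the decisive feature again being that the kernel collapses to a one-dimensional space. As established in this subsection, the hypotheses $f_1,f_2\in\mathcal{G}H^\infty$ let Theorem \ref{thm infinity} render both $T_{f_1}$ and $T_{\overline{f_2}}$ invertible, with inverses $T_{f_1^{-1}}$ and $T_{\overline{f_2}^{-1}}$; hence $R_1f=0$ forces $f=-\la f,u\ra\,\phi$, where $\phi:=f_1^{-1}T_{\overline{f_2}^{-1}}v$, so that $M=\bigvee\{\phi\}=\mathbb{C}\phi$ is one-dimensional and, by Theorem \ref{thm fj}, nearly $S^*$-invariant with one-dimensional defect space $F=\bigvee\{f_1^{-1}T_{\overline{f_2}^{-1}}(S^*v)\}$; write $e_1$ for its normalized generator and $c=\|f_1^{-1}T_{\overline{f_2}^{-1}}(S^*v)\|$. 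I would then apply Theorem \ref{CGP} with $m=1$, splitting according to whether $\phi(0)=a_0b_0$ vanishes.

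In case $(1)$, $a_0b_0\neq0$, so $\phi(0)\neq0$ and $f_0=P_M 1=\overline{a_0b_0}\,\|\phi\|^{-2}\phi$ is a nonzero multiple of $\phi$; thus $M=\mathbb{C}f_0$. Because $(k_0,k_1)\mapsto k_0f_0+zk_1e_1$ restricts to a surjective isometry from $K$ onto $M$ (the identity $\|f\|^2=\|k_0\|^2+\|k_1\|^2$), the subspace $K$ is forced to be one-dimensional. I would then check that $(1,0)\in K$: in the description of $K$ as the largest $S^*\oplus S^*$-invariant subspace with $(S^{*n}k_0)f_0+z(S^{*n}k_1)e_1\in M$ for all $n$, the function $S^{*n}1$ equals $1$ for $n=0$ (yielding $f_0\in M$) and $0$ for $n\geq1$ (yielding $0\in M$), so $\mathbb{C}\times\{0\}\subseteq K$. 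Comparing dimensions gives $K=\mathbb{C}\times\{0\}$ and $M=\{k_0f_0:\,(k_0,0)\in K\}$, as required.

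In case $(2)$, $a_0b_0=0$; since $f_1\in\mathcal{G}H^\infty$ gives $b_0=f_1^{-1}(0)\neq0$, this forces $a_0=(T_{\overline{f_2}^{-1}}v)(0)=0$, so every function in $M$ vanishes at $0$ and Theorem \ref{CGP}$(2)$ applies. The heart of the argument is to identify $ze_1$ with a multiple of $\phi$. For this I would first show that $S^*=T_{\overline{z}}$ commutes with $T_{\overline{f_2}^{-1}}$: by Theorem \ref{thm infinity} both $T_{\overline{z}}T_{\overline{f_2}^{-1}}$ and $T_{\overline{f_2}^{-1}}T_{\overline{z}}$ coincide with $T_{\overline{z}\,\overline{f_2}^{-1}}$, whence $T_{\overline{f_2}^{-1}}(S^*v)=S^*(T_{\overline{f_2}^{-1}}v)$ and therefore $z\,f_1^{-1}T_{\overline{f_2}^{-1}}(S^*v)=f_1^{-1}\big(T_{\overline{f_2}^{-1}}v-a_0\big)=\phi$, using $a_0=0$. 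Hence $ze_1=\phi/c$, and Theorem \ref{CGP}$(2)$ presents $M$ as $\{k_1\,\phi/c:\,k_1\in K\}$; one-dimensionality of $M=\mathbb{C}\phi$ then forces $k_1$ to be constant, giving $K=\mathbb{C}$.

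I expect the identity underlying case $(2)$ to be the main obstacle. In the inner-symbol case the analogue $zS^*(\overline{\theta}v)=\overline{\theta}v-a_0$ is immediate, whereas here the defect generator is built from the co-analytic operator $T_{\overline{f_2}^{-1}}$, which does not commute with multiplication by $z$; the computation succeeds only because $T_{\overline{f_2}^{-1}}$ does commute with $S^*$ via Theorem \ref{thm infinity}, and because the constant $a_0$, whose vanishing distinguishes the two cases, is tracked throughout.
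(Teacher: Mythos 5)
Your proposal is correct and follows essentially the same route as the paper: apply the C-G-P theorem to the one-dimensional space $M=\bigvee\{f_1^{-1}(T_{\overline{f_2}^{-1}}v)\}$, using the key identity $zf_1^{-1}T_{\overline{f_2}^{-1}}(S^*v)=f_1^{-1}(T_{\overline{f_2}^{-1}}v-a_0)$ obtained from the commutation of $T_{\overline{z}}$ with $T_{\overline{f_2}^{-1}}$ via Theorem \ref{thm infinity}, and then splitting on whether $a_0$ vanishes. Your two small refinements --- the dimension count identifying $K$ in case $(1)$, and the observation that $f_1\in\mathcal{G}H^\infty$ forces $b_0\neq 0$ so that $a_0b_0=0$ reduces to $a_0=0$ --- are in fact slightly cleaner than the paper's wording of the same steps.
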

\begin{proof}$(1)$ in this case,   Theorem \ref{CGP} $(1)$ gives
\begin{eqnarray*}
M=\{f:\;f=k_0 f_0+ k_1 \frac{f_1^{-1}(T_{\overline{f_2}^{-1}}v-a_0)}
{{\|f_1^{-1}T_{\overline{f_2}^{-1}}(S^*v) \|}}:\;(k_0, k_1)\in K\}, \end{eqnarray*} due to $zf_1^{-1} T_{\overline{f_2}^{-1}}(S^*v)]=f_1^{-1}z[S^* (T_{\overline{f_2}^{-1}}v)]=f_1^{-1}(T_{\overline{f_2}^{-1}}v-a_0).$ Further by $M=\bigvee\{f_1^{-1}(T_{\overline{f_2}^{-1}}v)\}$, it follows that
\begin{eqnarray*}k_0\in \mathbb{C}\;\; \mbox{and} \;\;k_1\frac{f_1^{-1}(T_{\overline{f_2}^{-1}}
v-a_0)}{\|f_1^{-1}T_{\overline{f_2}^{-1}}(S^*v) \|} =\mu f_1^{-1}(T_{\overline{f_2}^{-1}}v) \;\mbox{with $\mu\in \mathbb{C},$}\end{eqnarray*} which is equivalent to
$k_0\in \mathbb{C}\;\mbox{and}\;k_1=0$ by $a_0\neq 0.$

$(2)$ in this case, it follows either $a_0=0$ or $b_0=0$ and $f_0=P_M1=0.$ If $b_0=0$, Theorem \ref{CGP} $(2)$ implies that
\begin{eqnarray*}M=\{f:\;f=k_1 \frac{f_1^{-1}(T_{\overline{f_2}^{-1}}v-a_0)}{\| f_1^{-1} T_{\overline{f_2}^{-1}}(S^*v)\|}:\;k_1\in K\},\end{eqnarray*}
 which is valid if and only if $a_0=0$  and $k_1\in \mathbb{C}.$ \end{proof}
\subsection{ $g=\overline{\theta}$ with $\theta$ nonconstant inner function}
Because of its link with model spaces, this case is of particular interest. For every $h\in \Ker R_1,$ the equation \eqref{Tg1} is equivalent to
$ h+\la h, u\ra \theta v\in \theta\overline{H_0^2}.$ So \begin{eqnarray*}M=\Ker R_1\subset (H^2 \cap  \theta\overline{H_0^2}) \oplus \bigvee\{ \theta v\}=K_{\theta}\oplus \bigvee\{ \theta v\}.\end{eqnarray*}
Take any vector $h= h_1+\lambda \theta v\in M$ with $h_1\in K_\theta\;\mbox{and}\;\lambda \in \mathbb{C}$, such that $R_1 h=0,$ which is equivalent to
\begin{eqnarray} \lambda(1 + \la \theta v, u\ra)=-\la h_1, u \ra .\label{Mequation} \end{eqnarray}
Now we divide this into two subsections to represent $M=\Ker R_1$ in terms of backward shift-invariant subspaces.

\subsubsection{$\theta|u$.} In this case, the equation \eqref{Mequation} now is changed into $\lambda(1+\la \theta v, u\ra)=0$. If $1+\la \theta v, u\ra\neq 0,$ then $\lambda=0$ and $M=K_\theta$ a nearly $S^*$-invariant subspace. So we suppose $1+\la \theta v, u\ra =0$, and then $M=K_\theta\oplus\bigvee\{\theta v\}$ is nearly $S^*$-invariant with a $1$-dimensional defect space $F=\bigvee\{ \theta S^* v\}$ from Theorem \ref{thm oltheta}. Using Theorem \ref{CGP}, we obtain  a corollary on the representation of $\Ker R_1.$
\begin{cor} Given a nearly $S^*$-invariant subspace $M=K_\theta\oplus\bigvee\{\theta v\}$ with defect $1$, and let $f_0=P_M 1=1-\overline{\theta(0)}\theta +  \overline{\theta(0)}\overline{v(0)} \|v\|^{-2}  \theta v,$ we have
\begin{eqnarray}M &=&\{f:\; f=k_0- (k_0\overline{\theta(0)}+ k_1\frac{v(0)}{\| S^*v\|} )\theta+ ( k_0\frac{\overline{\theta(0)v(0)}}{\|v\|^2} \nonumber\\&&\quad\quad\quad \quad+ \| S^* v\|^{-1} k_1)\theta v:\;(k_0,k_1)\in K\},\label{Mtheta}\end{eqnarray} with an $S^*\oplus S^*$-invariant subspace
$K=\{(k_0, k_1):\;k_i \;\mbox{satisfies}\;\eqref{thetav1}\;\mbox{for}\;i=0,1\},$
where \begin{eqnarray}
           k_0-(k_0\overline{\theta(0)}+ k_1\frac{v(0)}{\| S^* v\|})\theta\in K_\theta\;\;\mbox{and}\;\;
 k_0\frac{\overline{\theta(0)v(0)}}{\|v\|^2} + \| S^* v\|^{-1}k_1\in \mathbb{C}.\quad \label{thetav1}             \end{eqnarray}
\end{cor}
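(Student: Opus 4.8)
The plan is to apply the C-G-P Theorem (Theorem~\ref{CGP}) in its case~$(1)$. This case applies because $M=K_\theta\oplus\bigvee\{\theta v\}$ contains functions that do not vanish at $0$: concretely $P_M1\neq 0$, since $(P_M1)(0)=\|P_M1\|^2$ and a short evaluation gives $(P_M1)(0)=1-|\theta(0)|^2+|\theta(0)|^2|v(0)|^2\|v\|^{-2}\geq 1-|\theta(0)|^2>0$, as $|\theta(0)|<1$ for the nonconstant inner function $\theta$. Theorem~\ref{thm oltheta} supplies the defect $m=1$ and the one-dimensional defect space $F=\bigvee\{\theta S^*v\}$, whose unit generator is $e_1=\theta S^*v/\|S^*v\|$. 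Theorem~\ref{CGP}$(1)$ then represents each $f\in M$ as $f=k_0f_0+zk_1e_1$ with $(k_0,k_1)\in K$ and $f_0=P_M1$; the remaining work is to compute $f_0$ and $ze_1$ explicitly and to identify $K$.

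For $f_0=P_M1$ I would use that $K_\theta\perp\theta H^2$, so that $M=K_\theta\oplus\bigvee\{\theta v\}$ is an orthogonal decomposition and $P_M1=P_{K_\theta}1+\|v\|^{-2}\langle 1,\theta v\rangle\,\theta v$. Here $P_{K_\theta}1=1-P_{\theta H^2}1=1-\theta T_{\overline\theta}1=1-\overline{\theta(0)}\theta$, while $\langle 1,\theta v\rangle=\overline{\theta(0)v(0)}$ and $\|\theta v\|=\|v\|$, which together give the stated formula for $f_0$. For $ze_1$ I would use $zS^*v=v-v(0)$, whence $ze_1=\|S^*v\|^{-1}(\theta v-v(0)\theta)$. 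Substituting both into $f=k_0f_0+zk_1e_1$ and collecting the coefficients of the vectors $1$, $\theta$ and $\theta v$ yields exactly \eqref{Mtheta}; this step is routine bookkeeping.

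To pin down $K$ I would impose membership $f\in M=K_\theta\oplus\bigvee\{\theta v\}$ on \eqref{Mtheta}. Since $\theta v$ spans the second, one-dimensional summand, the function multiplying $\theta v$ must reduce to a scalar, which is precisely the second relation of \eqref{thetav1}; granting this, the remaining part $k_0-(k_0\overline{\theta(0)}+k_1 v(0)\|S^*v\|^{-1})\theta$ is forced to lie in $K_\theta$, the first relation of \eqref{thetav1}. Thus $K$ is the set of pairs satisfying \eqref{thetav1}, and it is closed, being the intersection of preimages of the closed sets $K_\theta$ and $\mathbb{C}$ under bounded linear maps of $(k_0,k_1)$.

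The step I expect to be the main obstacle is checking that this $K$ is genuinely $S^*\oplus S^*$-invariant, as Theorem~\ref{CGP} demands. The scalar relation is easy: it says a fixed linear combination of $k_0$ and $k_1$ is constant, and since $S^*$ annihilates constants, the relation survives $(k_0,k_1)\mapsto(S^*k_0,S^*k_1)$. The $K_\theta$-relation is the delicate one; using $T_{\overline\theta}(p\theta)=p$ for $p\in H^2$, I would first recast it as the operator identity $T_{\overline\theta}k_0=\overline{\theta(0)}k_0+v(0)\|S^*v\|^{-1}k_1$. The key point is that $T_{\overline\theta}$ commutes with $S^*=T_{\overline z}$: by Theorem~\ref{thm infinity}, since $\theta,z\in H^\infty$, both $T_{\overline\theta}T_{\overline z}$ and $T_{\overline z}T_{\overline\theta}$ equal $T_{\overline{\theta z}}$. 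Applying $S^*$ to the operator identity and commuting $T_{\overline\theta}$ past it then shows that $(S^*k_0,S^*k_1)$ satisfies the same identity, which establishes the invariance. Alternatively, one may invoke the principle recorded just before this subsection, identifying $K$ as the largest $S^*\oplus S^*$-invariant subspace mapped into $M$ by $(k_0,k_1)\mapsto k_0f_0+zk_1e_1$.
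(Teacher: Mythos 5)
Your proposal is correct and follows the same overall skeleton as the paper: apply Theorem~\ref{CGP}$(1)$, compute $f_0=P_M1$ and $ze_1=\|S^*v\|^{-1}\theta(v-v(0))$, read off the representation \eqref{Mtheta}, and characterize $K$ by imposing membership in $K_\theta\oplus\bigvee\{\theta v\}$. (You are also more careful than the paper in one respect: you actually verify $P_M1\neq 0$ so that case $(1)$ of the C-G-P theorem applies.) The one place where you genuinely diverge is the delicate step, the $S^*$-invariance of the first relation in \eqref{thetav1}. The paper argues directly: writing $Y_\theta$ for the result of applying $S^*$ to the whole expression (which lies in $K_\theta$ since $K_\theta$ is $S^*$-invariant), it uses the identity $S^*(k\theta)=(S^*k)\theta+k(0)S^*\theta$ to show that the expression with $k_i$ replaced by $S^*k_i$ differs from $Y_\theta$ by a scalar multiple of $T_{\overline z}\theta=S^*\theta\in K_\theta$. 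You instead recast the relation as the operator identity $T_{\overline\theta}k_0=\overline{\theta(0)}k_0+v(0)\|S^*v\|^{-1}k_1$ (using $K_\theta=\Ker T_{\overline\theta}$ and $T_{\overline\theta}(p\theta)=p$) and then commute $T_{\overline z}$ past $T_{\overline\theta}$ via Theorem~\ref{thm infinity}. Both arguments are valid; yours is arguably cleaner and more systematic (the same commutation trick disposes of the analogous step in Corollary~\ref{cor oltheta2} without further computation), while the paper's is more elementary and self-contained, needing only the product rule for $S^*$ and the fact that $S^*\theta\perp\theta H^2$.
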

\begin{proof} By Theorem \ref{CGP} $(1)$, we obtain
\begin{eqnarray*}M=\{f:\; f=k_0f_0 + k_1 \frac{\theta (v-v(0)) }{\| S^* v\|}:\;(k_0,k_1)\in K\},\end{eqnarray*} which equals $K_\theta\oplus\bigvee\{\theta v\}$ implying the desired representation in \eqref{Mtheta}. It is clear the second relation in \eqref{thetav1} holds for $S^*k_i$, $i=1,2$. At the same time, the first relation in \eqref{thetav1} together with the fact $K_\theta$ is an $S^*$-invariant subspace verify that
\begin{eqnarray*}
 Y_\theta:=S^*k_0-S^*(k_0\overline{\theta(0)}\theta+ \frac{ v(0)k_1}{\| S^* v\|}\theta) \in K_\theta.
  \end{eqnarray*}
Then it turns out that
\begin{eqnarray*}&&S^*k_0-(S^* k_0\overline{\theta(0)}+\frac{v(0)}{\|S^* v\|} S^*k_1)\theta\\&=&Y_\theta +\overline{\theta(0)}k_0(0)\frac{ \theta-\theta(0)}{z}+\frac{v(0)k_1(0)}{\|S^*v\|}\frac{\theta-\theta(0)}{z}
\\&=&Y_\theta +(\overline{\theta(0)}k_0(0)+\frac{v(0)k_1(0)}{\|S^*v\|}) T_{\overline{z}} \theta \in K_\theta,\end{eqnarray*} since $\la T_{\overline{z}} \theta, \theta \ra=\la 1, z \ra=0 $ holds. This means the first relation in \eqref{thetav1} also holds for $S^*k_i$, $i=1,2$. So $K$ is an $S^*\oplus S^*$-invariant subspace. \end{proof}

\subsubsection{ $\theta\nmid u$.} In this case, we decompose $u$ into $u=u_1+u_\theta$ with nonzero $u_1\in K_\theta$ and $u_\theta\in \theta H^2.$ Then the identity \eqref{Mequation} becomes \begin{eqnarray}\lambda (1+\la \theta v, u_\theta \ra)=-\la h_1, u_1\ra.\label{general1}\end{eqnarray} Especially Theorem \ref{thm oltheta} implies $\Ker R_1$ is nearly $S^*$-invariant with a $2$-dimensional defect space $F=\bigvee \{\theta S^* v, u_1\}.$ For later use we present a remark concerning the projection $P_M1$.

\begin{rem}\label{remprojection} Let $M=\Ker R_1\subset N:=K_\theta\oplus\bigvee\{\theta v\}$, and denote $N=M\oplus \bigvee\{G\}$ with $G=g+\mu \theta v$, where $g\in K_\theta$ and $\mu \in \mathbb{C}$. Then
\begin{eqnarray}  P_M 1 &=&1-\overline{\theta(0)}\theta +\frac{\overline{\theta(0)v(0)}}{\|v\|^2}\theta v\nonumber\\&&-\frac{\la 1-\overline{\theta(0)}\theta, g\ra+\overline{\theta(0)v(0)\mu}}{\|g\|^2+|\mu|^2\|v\|^2}(g+\mu \theta v).\label{one}\end{eqnarray}
\end{rem}

For simplicity, we denote $w_{\theta}:= 1+\la \theta v, u_\theta \ra$ and $$\rho_\theta:=\frac{ \overline{u_1(0)}+ \overline{\theta(0)v(0)} w_\theta}{\|u_1\|^2+|w_\theta|^2\|v\|^2}.$$
Applying Theorem \ref{CGP}, we present a corollary on $\Ker R_1.$

\begin{cor}\label{cor oltheta2}
$(1)$ In the case $w_{\theta}\neq 0,$
$M=N\ominus\bigvee\{u_1+\overline{w_\theta}\theta v\}$ is nearly $S^*$-invariant with defect $2$, and letting
 \begin{eqnarray}&&f_0=P_M1= 1-\overline{\theta(0)}\theta+ \overline{\theta(0)v(0)} \|v\|^{-2}  \theta v-\rho_\theta(u_1+\overline{w_\theta} \theta v),\quad\label{f02}
  \\&& v_0= P(u_1+\overline{w_\theta}\theta v-\theta(0)\overline{w_\theta}v+\theta(0)v(0)\|v\|^{-2}
\overline{w_\theta}|v|^2-\overline{\rho_\theta}|u_1+
\overline{w_\theta}\theta v|^2),\nonumber\\&& v_1=\|S^*v\|^{-1}P(\overline{w_\theta}v(\overline{v-v(0)}))\;
\mbox{and}\; v_2 =\|u_1\|^{-1}P(\overline{z} |u_1|^2),\nonumber\end{eqnarray} we have
\begin{eqnarray}M &=&\{f: f=k_0- (k_0\overline{\theta(0)}+k_1\frac{ v(0)}{\|S^* v\|})\theta+( k_0\frac{\overline{\theta(0)v(0)}}{\|v\|^2}+\frac{k_1}{\|S^* v\|}-k_2\frac{z\overline{w_\theta}}{\|u_1\|})\theta v\nonumber \\&&\quad
+(-k_0\rho_\theta+k_2\frac{z}{\|u_1\|})(u_1+\overline{w_\theta} \theta v) :\;(k_0, k_1, k_2)\in K\},\quad \quad\quad\label{M2}
\end{eqnarray}
with an $S^*\oplus S^* \oplus S^*$-invariant subspace $K=\{(k_0, k_1, k_2):\; k_i$\;$\mbox{satisfies}$ \; $ \eqref{Akg}\;\mbox{for}\;i=0,1,2\;\},$ where
\begin{eqnarray}\left\{
                   \begin{array}{ll}
  k_0-(k_0\overline{\theta(0)}
+k_1\frac{ v(0)}{\|S^* v\|})\theta\in K_\theta, \vspace{1.5mm}\\
                      k_0 \frac{\overline{\theta(0)v(0)}}{\|v\|^2}
                      +\frac{k_1}{\|S^* v\|}-k_2\frac{z\overline{w_\theta}}{\|u_1\|}  \in \mathbb{C},\vspace{2.5mm}\\
                    \la k_0,\; z^n v_0\ra +\la k_1, \; z^n v_1 \ra +\la k_2, \;z^nv_2\ra=0\;\;
\mbox{for all $n\in \mathbb{N}.$}
                   \end{array}
                 \right.\label{Akg}
 \end{eqnarray}
 
 $(2)$ In the case $w_{\theta}=0$, $M=N \ominus\bigvee\{u_1\}$ is nearly $S^*$-invariant with defect $2$, and letting 
\begin{eqnarray*}&&f_0=P_{M}1=1-\overline{\theta(0)}\theta -\overline{u_1(0)}\|u_1\|^{-2}u_1+ \overline{\theta(0) v(0)}\|v\|^{-2} \theta v,\quad\quad\label{f01}\\&&v_0=P(u_1- u_1(0)\|u_1\|^{-2} |u_1|^2) \quad \mbox{and}\quad v_2=P( \|u_1\|^{-1}\overline{z}|u_1|^2 ),\nonumber\end{eqnarray*} we have
\begin{eqnarray*} M&=&\{f: f=k_0- (k_0\overline{\theta(0)} +k_1\frac{v(0)}{\|S^* v\|})\theta+ (k_0\frac{\overline{\theta(0)v(0)}}{\|v\|^2}+ \frac{k_1}{\|S^* v\|})\theta v\nonumber\\&&\quad\quad\quad +(-k_0\frac{\overline{u_1(0)}}{\|u_1\|^2}+
k_2\frac{z }{\|u_1\|})u_1:\;(k_0, k_1, k_2)\in K\}, \end{eqnarray*}
with an $S^*\oplus S^* \oplus S^*$-invariant subspace $K=\{(k_0, k_1, k_2):\; k_i\;\mbox{satisfies} \; \eqref{u31}$ $\;\mbox{for}\;i=0,1,2\;\},$ where
 \begin{eqnarray}\left\{
                   \begin{array}{ll}
                     k_0-(k_0\overline{\theta(0)} +k_1\frac{v(0)}{\|S^*v\|})\theta \in K_\theta,\vspace{1.5mm} \\
                    k_0\frac{\overline{\theta(0)v(0)}}{\|v\|^2}+
                    \|S^* v\|^{-1}k_1\in \mathbb{C},  \vspace{2.5mm}\\
                    \la k_0,z^nv_0\ra+\la k_2, z^n v_2\ra =0\;\;\mbox{for}\;n\in \mathbb{N}.\label{u31}
                   \end{array}
                 \right.
   \end{eqnarray}
   
\end{cor}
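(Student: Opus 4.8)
The plan is to exploit the explicit description of $M=\Ker R_1$ built up immediately before the statement and then feed the resulting data into the C--G--P Theorem (Theorem \ref{CGP}). Every $h\in M$ was written as $h=h_1+\lambda\theta v$ with $h_1\in K_\theta$, $\lambda\in\mathbb{C}$, subject to \eqref{general1}, i.e. $\lambda w_\theta=-\la h_1,u_1\ra$. The first step is to read off the two regimes. When $w_\theta\neq0$, the scalar $\lambda$ is slaved to $h_1$, and a short orthogonality check inside $N=K_\theta\oplus\bigvee\{\theta v\}$ shows that $M$ is exactly the orthogonal complement in $N$ of the single vector $u_1+\overline{w_\theta}\theta v$; when $w_\theta=0$, the defining relation forces $\la h_1,u_1\ra=0$ with $\lambda$ free, so $M=N\ominus\bigvee\{u_1\}$. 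This yields the two ambient descriptions of $M$ asserted in parts $(1)$ and $(2)$.

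Next I would assemble the ingredients the theorem needs. Theorem \ref{thm oltheta}, specialised to $n=1$ and $\Lambda=\{1\}$ (since $\theta\nmid u$), gives that $M$ is nearly $S^*$-invariant with defect $2$ and defect space $F=\bigvee\{\theta S^*v,\;u_1\}$. Because $\theta S^*v\in\theta H^2$ and $u_1\in K_\theta$ are orthogonal and nonzero, the normalised pair $e_1=\theta S^*v/\|S^*v\|$ and $e_2=u_1/\|u_1\|$ is an orthonormal basis of $F$. The reproducing-kernel vector $f_0=P_M1$ is then computed from Remark \ref{remprojection}, formula \eqref{one}, taking $(g,\mu)=(u_1,\overline{w_\theta})$ in case $(1)$ and $(g,\mu)=(u_1,0)$ in case $(2)$; this reproduces \eqref{f02} and the part-$(2)$ expression for $f_0$.

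With $f_0=P_M1\neq0$ I would invoke Theorem \ref{CGP}$(1)$ to write $f=k_0f_0+zk_1e_1+zk_2e_2$, substitute the identity $ze_1=\theta(v-v(0))/\|S^*v\|$ together with $ze_2=zu_1/\|u_1\|$ and the expansion of $f_0$, and regroup the contributions into the three families of terms (a $\theta$-term, a $\theta v$-term, and a $(u_1+\overline{w_\theta}\theta v)$-term) so as to land precisely on \eqref{M2}. The description of $K$ then follows from the remark after Theorem \ref{CGP}: $K$ is the largest $S^*\oplus S^*\oplus S^*$-invariant subspace for which the representing map applied to every backward shift $(S^{*n}k_0,S^{*n}k_1,S^{*n}k_2)$ still lands in $M$. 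Splitting ``$\in M$'' into ``$\in N$'' and ``orthogonal to the removed vector'', the $N$-membership produces the two pointwise requirements in \eqref{Akg} (respectively \eqref{u31}), namely that the designated $K_\theta$-part lie in $K_\theta$ and that the $\theta v$-coefficient be a scalar; exactly as in the proof of the preceding $\theta\mid u$ corollary, these are preserved by $S^*$ (using $\la T_{\overline z}\theta,\theta\ra=0$), so they need only be imposed on $(k_0,k_1,k_2)$, while the orthogonality condition, imposed on all shifts, unfolds into the moment family $\la k_0,z^nv_0\ra+\la k_1,z^nv_1\ra+\la k_2,z^nv_2\ra=0$.

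I expect the \emph{main obstacle} to be the projection bookkeeping that turns this orthogonality condition into the clean functions $v_0,v_1,v_2$ displayed in the statement. One must expand $\overline{f_0}$, $\overline{ze_1}$, $\overline{ze_2}$ against the removed generator, simplify using $|\theta|=1$ on $\mathbb{T}$ together with the facts $\overline{\theta}u_1\in\overline{H_0^2}$ and $\theta\overline{u_1}\in zH^2$, and discard the anti-analytic pieces killed by $P$; the delicate point is to track how the regrouping in \eqref{M2} folds the $\overline{w_\theta}\theta v$-component of the removed vector into the $\theta v$-coefficient (the second line of \eqref{Akg}), since this is what collapses the $v_2$-contribution to $\|u_1\|^{-1}P(\overline z|u_1|^2)$. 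Finally I would confirm that these conditions cut out $M$ exactly and no larger set, so that $K$ is genuinely the maximal invariant subspace. Case $(2)$, with $w_\theta=0$ and removed vector $u_1$, is the same computation with the $\theta v$-cross terms absent, so that $v_1$ drops out and one is left with the two-term moment condition \eqref{u31}.
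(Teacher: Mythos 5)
Your proposal is correct and follows essentially the same route as the paper: identify $M$ as $N\ominus\bigvee\{u_1+\overline{w_\theta}\theta v\}$ (resp.\ $N\ominus\bigvee\{u_1\}$) from \eqref{general1}, compute $f_0$ via Remark \ref{remprojection} with $(g,\mu)=(u_1,\overline{w_\theta})$, apply Theorem \ref{CGP}$(1)$ with the orthonormal defect basis $\{\theta S^*v/\|S^*v\|,\,u_1/\|u_1\|\}$, regroup to get \eqref{M2}, and verify the $S^*$-invariance of the conditions defining $K$ exactly as in the preceding $\theta\mid u$ case. Your write-up is in fact somewhat more explicit than the paper's about how the orthogonality to the removed generator unfolds into the moment conditions involving $v_0,v_1,v_2$, but the argument is the same.
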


\begin{proof}For the case $w_{\theta}\neq 0,$ the equation \eqref{general1} implies  $\lambda=- w_{\theta}^{-1}\la h_1, u_1\ra  $ and then
$M=\Ker R_1=\{f:\;f= k-w_\theta^{-1}\la k,u_1\ra\theta v,\;k\in K_\theta\}.$ By some calculations, it follows \begin{eqnarray}M=N\ominus \bigvee\{u_1+\overline{w_\theta}\theta v\}.\label{MN}\end{eqnarray} Letting $g=u_1$ and $\mu=\overline{w_\theta}$ in \eqref{one}, we obtain $f_0$ in \eqref{f02}. By Theorem \ref{CGP} $(1)$, it follows
\begin{eqnarray*}M=
\{f:\; f=k_0f_0+k_1\frac{\theta(v-v(0))}{\|S^* v\|} +k_2\frac{zu_1}{\|u_1\|}:\;(k_0, k_1, k_2)\in K\}
\end{eqnarray*} which together with \eqref{MN} imply the representation of $M$ in \eqref{M2}. Note the third formula in \eqref{Akg} holds for $S^*k_i$, $i=0, 1, 2.$ Following the similar lines for proving \eqref{thetav1} is $S^*$-invariant, it is easy to check the first two relations of \eqref{Akg} are valid for $S^*k_i$, $i=0,1,2$. So $K$ is an $S^*\oplus S^*\oplus S^*$-invariant subspace. In particular, if $w_{\theta}=0$, the equation \eqref{general1} implies \begin{eqnarray*}M=N\ominus\bigvee\{ u_1\},\end{eqnarray*} which is a special case of \eqref{MN} with $w_\theta=0$. Hence we can prove the statement $(2)$ from the similar proof of statement $(1)$ with $w_\theta=0$.  \end{proof}
In order to help understand the case $g=\overline{\theta}$ with $\theta$ an inner function, we present an example for Corollary \ref{cor oltheta2}.

 \begin{exm}Let $\theta=z^m\; (m\geq 1)$ and $u=u_1+u_2$ with $u_1=z^{m-1}/4$ and $u_2\in z^m H^2.$ It easy to check the kernel of $R_1$ is $$M=\bigvee \{1, z, \cdots, z^{m-1}\ra\oplus \bigvee\{ z^m v\}\ominus\bigvee\{ z^{m-1}/4+\overline{w_\theta}z^m v\},$$ which is nearly $S^*$-invariant with $2$-dimensional defect space\\ $F=\bigvee\{z^m S^*v, z^{m-1}\}$ from Theorem \ref{thm oltheta}. If  $w_\theta=1+\la z^m v, u_\theta\ra\neq 0,$ then Corollary \ref{cor oltheta2} $(2)$ indicates the following representation for $M$:
  \begin{eqnarray*} M&=& \{f:\;f=k_0
- k_1\frac{v(0)}{\|S^* v\|} z^m +(k_1 \|S^* v\|^{-1} -4k_2z\overline{w_\theta})z^m v\quad\quad\quad\\&&+k_2z(z^{m-1}+4\overline{w_\theta}z^m v):\;(k_0, k_1, k_2)\in K\},\end{eqnarray*} with an $S^*\oplus S^*\oplus S^*$-invariant subspace $K=\{(k_0, k_1, k_2):\; k_i\; \mbox{satisfies}$ $\eqref{Akgm} \;\mbox{for} \;i=0,1,2\}$ where
\begin{eqnarray}  \left\{
                    \begin{array}{ll}
                 k_0-k_1\frac{ v(0)}{\|S^* v\|} z^m\in K_{z^m},\vspace{1.5mm}\\
                 k_1 \|S^* v\|^{-1}- 4k_2z\overline{w_\theta}  \in \mathbb{C},\vspace{1.5mm}\\
                    \la k_0,\; z^n v_0\ra +\la k_1, \; z^n v_1\ra =0\;\mbox{for}\;n\in \mathbb{N},
                    \end{array}
                  \right.
                   \label{Akgm}\end{eqnarray}
with \begin{eqnarray*} v_0=4^{-1}z^{m-1}+\overline{w_\theta}z^m v,\;\; v_1=\|S^*v\|^{-1} P(\overline{w_\theta}v(\overline{v-v(0)})). \end{eqnarray*}
\end{exm}

{\bf Acknowledgments}. The authors thank the referee for many useful comments which improve the presentation considerably. This work was done while the first author was visiting the University of Leeds. She is grateful to the School of Mathematics at the University of Leeds for its hospitality. The first author is supported  by the National Natural Science Foundation of
China (Grant No. 11701422).

\end{document}